\newtheorem{theorem}{Theorem}[section]
\newtheorem{lemma}[theorem]{Lemma}
\newtheorem{corollary}[theorem]{Corollary}
\theoremstyle{definition}
\newtheorem{remark}[theorem]{Remark}
\newtheorem{problem}[theorem]{Problem}
\begin{document}

\title{On uniform approximation to successive powers of a real number}

\author{Johannes Schleischitz} 

\address{Institute of Mathematics, Boku Vienna, Austria  \\ 
johannes.schleischitz@boku.ac.at}

\begin{abstract}
We establish new inequalities involving classical exponents of Diophantine approximation.
This allows for improving on the work of H. Davenport, W.M. Schmidt and M. Laurent concerning the maximum value
of the exponent $\widehat{\lambda}_{n}(\zeta)$ among all real transcendental $\zeta$.
In particular we refine the estimation $\widehat{\lambda}_{n}(\zeta)\leq \lceil n/2\rceil^{-1}$
due to M. Laurent by
$\widehat{\lambda}_{n}(\zeta)\leq \widehat{w}_{\lceil n/2\rceil}(\zeta)^{-1}$ for all $n\geq 1$,
and for even $n$ we replace the bound $2/n$ for $\widehat{\lambda}_{n}(\zeta)$ first found by Davenport
and Schmidt by roughly $\frac{2}{n}-\frac{4}{n^{3}}$, which provides the currently best known bound when $n\geq 6$.
\end{abstract}

\maketitle

{\footnotesize{Supported by the Austrian Science Fund FWF grant P24828.} \\

{\em Keywords}: simultaneous approximation, geometry of numbers, successive minima \\
Math Subject Classification 2010: 11H06, 11J13, 11J25}

\section{Introduction}

\subsection{Exponents of approximation} \label{sek1}

We start with the definition of classical exponents of Diophantine approximation. 
Let $\zeta$ be a real transcendental number and $n\geq 1$ be an integer. For $1\leq j\leq n+1$
we define the exponents of simultaneous approximation $\lambda_{n,j}(\zeta)$ as
the supremum of $\eta\in{\mathbb{R}}$ such that the system 
\begin{equation}  \label{eq:lambda}
\vert x\vert \leq X, \qquad \max_{1\leq i\leq n} \vert \zeta^{i}x-y_{i}\vert \leq X^{-\eta},  
\end{equation}
has (at least) $j$ linearly independent solutions $(x,y_{1},y_{2},\ldots, y_{n})\in{\mathbb{Z}^{n+1}}$ 
for arbitrarily large values of $X$. Moreover, let $\widehat{\lambda}_{n,j}(\zeta)$
be the supremum of $\eta$
such that \eqref{eq:lambda} has (at least) $j$ linearly independent solutions for all $X\geq X_{0}$. 
In case of $j=1$ we also only write $\lambda_{n}(\zeta)$ and $\widehat{\lambda}_{n}(\zeta)$ respectively,
which coincide with the classical exponents of approximation defined by Bugeaud and Laurent~\cite{buglau}.
By Dirichlet's Theorem, for all $n\geq 1$ and transcendental real $\zeta$, these exponents are
bounded below by
\begin{equation} \label{eq:ldiri}
\lambda_{n}(\zeta)\geq \widehat{\lambda}_{n}(\zeta)\geq \frac{1}{n}.
\end{equation}
Moreover it is clear from the definition that
\begin{equation}  \label{eq:ldirich}
\lambda_{1}(\zeta)\geq \lambda_{2}(\zeta)\geq \cdots, 
\qquad \widehat{\lambda}_{1}(\zeta)\geq \widehat{\lambda}_{2}(\zeta)\geq \cdots.
\end{equation}

Similarly,  for $1\leq j\leq n+1$ let $w_{n,j}(\zeta)$ and $\widehat{w}_{n,j}(\zeta)$ be 
the supremum of $\eta\in{\mathbb{R}}$ such that the system 
\begin{equation}  \label{eq:w}
H(P) \leq X, \qquad  0<\vert P(\zeta)\vert \leq X^{-\eta},  
\end{equation}
has (at least) $j$ linearly independent polynomial solutions 
$P(T)=a_{n}T^{n}+a_{n-1}T^{n-1}+\cdots+a_{0}$ of degree at most $n$
with integers $a_{j}$ for arbitrarily large $X$ and all large $X$, respectively. Here 
$H(P)=\max_{0\leq j\leq n} \vert a_{j}\vert$ is the height of $P$ as usual. 
Again for $j=1$ we also write $w_{n}(\zeta)$ and $\widehat{w}_{n}(\zeta)$,
which coincide with classical exponents. Dirichlet's Theorem implies the estimates
\begin{equation} \label{eq:wmono}
w_{n}(\zeta)\geq \widehat{w}_{n}(\zeta)\geq n.
\end{equation}
Moreover it is obvious that the exponents satisfy the relations
\begin{equation} \label{eq:wmonos}
w_{1}(\zeta)\leq w_{2}(\zeta)\leq \cdots, 
\qquad \widehat{w}_{1}(\zeta)\leq \widehat{w}_{2}(\zeta)\leq \cdots. 
\end{equation}
We point out that identities due to Mahler can be stated in the form
\begin{align}
\lambda_{n,j}(\zeta)= \frac{1}{\widehat{w}_{n,n+2-j}(\zeta)}, \qquad\qquad 1\leq j\leq n+1,  \label{eq:aligneins} \\
w_{n,j}(\zeta)= \frac{1}{\widehat{\lambda}_{n,n+2-j}(\zeta)}, \qquad\qquad 1\leq j\leq n+1,  \label{eq:alignzwei}
\end{align}
as carried out in~\cite{j2}. In particular \eqref{eq:alignzwei} will be of major importance
for us, since it is essential in the proof of our main result Theorem~\ref{neu}. 
The remaining results we now state below
again concern the best approximation constants, i.e. $j=1$ in the above definitions.
For the remainder of this section all results are implicitly understood to be
valid for all $n\geq 1$ and all real transcendental $\zeta$, unless stated otherwise. 
We remark that some of the results can be extended to the case of vectors 
$\underline{\zeta}\in\mathbb{R}^{n}$ which are $\mathbb{Q}$-linearly independent together with $\{1\}$.
Khintchine's transference inequalities~\cite{khintchine} connects the exponents 
of best approximation with the polynomial approximation constants via 
\begin{equation} \label{eq:khintchine}
\frac{w_{n}(\zeta)}{(n-1)w_{n}(\zeta)+n}\leq \lambda_{n}(\zeta)\leq \frac{w_{n}(\zeta)-n+1}{n}.
\end{equation}
German~\cite{german} established inequalities involving the uniform constants given by
\begin{equation} \label{eq:ogerman}
\frac{\widehat{w}_{n}(\zeta)-1}{(n-1)\widehat{w}_{n}(\zeta)}\leq
 \widehat{\lambda}_{n}(\zeta)\leq \frac{\widehat{w}_{n}(\zeta)-n+1}{\widehat{w}_{n}(\zeta)}. 
\end{equation}
For $n=2$ this reproved Jarn\'iks identity
\begin{equation} \label{eq:jahrnik}
\widehat{\lambda}_{2}(\zeta)=1-\frac{1}{\widehat{w}_{2}(\zeta)}.
\end{equation}
Concerning $n=2$, Davenport and Schmidt~\cite{davsh} further showed 
\begin{equation} \label{eq:najo}
\widehat{w}_{2}(\zeta)\leq \frac{3+\sqrt{5}}{2}=:\gamma\approx 2.6180.
\end{equation}
Equality can be obtained and Roy called such numbers extremal numbers, see~\cite{royyy}. 
For an exponent of approximation we call its spectrum the set of values attained by inserting
the set of real transcendental numbers.
Among other things it is known that the spectrum
of $\widehat{w}_{2}$ in the interval $[2,\gamma]$ is dense, see~\cite{royexp}, and uncountable~\cite{buglau}.
For $n\geq 3$ the best known upper bound for $\widehat{w}_{n}(\zeta)$ is given by
\begin{equation} \label{eq:bugschl}
\widehat{w}_{n}(\zeta)\leq n-\frac{1}{2}+\sqrt{n^{2}-2n+\frac{5}{4}},
\end{equation}
which is the main result of~\cite{buschl}. The right hand side is of order $2n-3/2+o(1)$ for 
large $n$ and always smaller than the bound $2n-1$ known before by Davenport and Schmidt~\cite{davsh}.
For $n=3$ the stronger bound
\begin{equation} \label{eq:stronger}
\widehat{w}_{3}(\zeta)\leq 3+\sqrt{2}\approx 4.4142
\end{equation}
was also established in~\cite[Theorem~2.1]{buschl}. 
We do not even know whether $\widehat{w}_{n}(\zeta)>n$ can occur for $n\geq 3$ and real transcendental $\zeta$.

Concerning the simultaneous approximation problem, Davenport and Schmidt~\cite{davsh} were the first 
to establish upper bounds for $\widehat{\lambda}_{n}(\zeta)$.
Laurent~\cite{laurent} refined their result for odd $n\geq 5$. The best upper bounds 
evolving from the mentioned papers can be stated in the form
\begin{equation} \label{eq:laurent}
\widehat{\lambda}_{n}(\zeta)\leq \frac{1}{\left\lceil \frac{n}{2}\right\rceil}, \qquad n\geq 1.
\end{equation}
For even $n$, the resulting bound $2/n$ actually due to Davenport and Schmidt,
will be refined in Theorem~\ref{lauverb}.
For $n=2$, from \eqref{eq:jahrnik} and \eqref{eq:najo} follows the stronger estimate
\begin{equation} \label{eq:zwei}
\widehat{\lambda}_{2}(\zeta)\leq \frac{\sqrt{5}-1}{2}:=\sigma\approx 0.6180.
\end{equation}
Jarn\'ik's identity \eqref{eq:jahrnik} moreover implies that the bound is sharp as well
and that the spectrum of $\widehat{\lambda}_{2}$ is dense in $[1/2,\sigma]$ as well.
For $n=3$, Roy~\cite{damroy} improved the bound $1/2$ resulting from \eqref{eq:laurent} to
\begin{equation} \label{eq:roybound}
\widehat{\lambda}_{3}(\zeta)\leq \frac{2+\sqrt{5}-\sqrt{7+2\sqrt{5}}}{2}=: \rho\approx 0.4245.
\end{equation}
We will refer to the values $\gamma,\sigma, \rho$ above in the following sections.
It is well-known and follows from \eqref{eq:ogerman} that $\widehat{w}_{n}(\zeta)>n$ is
equivalent to $\widehat{\lambda}_{n}(\zeta)>1/n$, and again we do not know whether
$\widehat{\lambda}_{n}(\zeta)>1/n$ can be obtained for any real transcendental $\zeta$ when $n\geq 3$.
For the proof of Theorem~\ref{lauverb} we will need the relation
\begin{equation} \label{eq:umform}
\lambda_{n}(\zeta)\geq 
\frac{\widehat{\lambda}_{n}(\zeta)^{2}+(n-2)\widehat{\lambda}_{n}(\zeta)}{(n-1)(1-\widehat{\lambda}_{n}(\zeta))},
\end{equation}
valid for all real transcendental $\zeta$ and $n\geq 2$, due to Schmidt and Summerer~\cite[(1.21)]{ssch}. 
See also~\cite{sums} for an improvement when $n=3$, where also a stronger estimate 
in any dimension $n\geq 4$ was conjectured.

Finally we introduce the exponents of approximation by algebraic numbers.
Let $w_{n}^{\ast}(\zeta)$ be the supremum of $\eta\in\mathbb{R}$ such that
\[
0<\vert \zeta-\alpha\vert \leq H(\alpha)^{-\eta}
\]
has a real algebraic solution $\alpha$ of degree at most $n$. Here $H(\alpha)=H(P)$ is the height
of the minimal polynomial $P$ of $\alpha$ over $\mathbb{Z}[T]$ with coprime coefficients. The
uniform exponent $\widehat{w}_{n}^{\ast}(\zeta)$ is defined as the supremum of $\eta$ such that
\[
H(\alpha)\leq X, \qquad 0<\vert \zeta-\alpha\vert \leq H(\alpha)^{-1}X^{-\eta}
\]
has a solution as above for all large $X$. Again the estimates
\[
\cdots \geq w_{2}^{\ast}(\zeta)\geq w_{1}^{\ast}(\zeta), \qquad
\cdots \geq \widehat{w}_{2}^{\ast}(\zeta)\geq \widehat{w}_{1}^{\ast}(\zeta)
\]
and 
\[
\widehat{w}_{n}^{\ast}(\zeta)\leq w_{n}^{\ast}(\zeta), \qquad n\geq 1,
\]
are obvious. Moreover we know that
\begin{equation} \label{eq:sternnicht}
w_{n}(\zeta)-n+1\leq w_{n}^{\ast}(\zeta)\leq w_{n}(\zeta), \qquad 
\widehat{w}_{n}(\zeta)-n+1\leq \widehat{w}_{n}^{\ast}(\zeta)\leq \widehat{w}_{n}(\zeta)
\end{equation}
see~\cite[Lemma~A.8]{bugbuch}. It is a longstanding open conjecture due 
to Wirsing that $w_{n}^{\ast}(\zeta)\geq n$ for all $n\geq 1$ and any transcendental real $\zeta$. We know that
\begin{equation} \label{eq:ber}
w_{n}^{\ast}(\zeta)\geq \frac{n+\sqrt{n^{2}+16n-8}}{4}
\end{equation}
by Bernik and Tishchenko~\cite{bertis}, which is of order $n/2+2+o(1)$ for large $n$. 
This has been further improved by Tishchenko~\cite{tish} to an expression of order
$n/2+3-o(1)$ for large $n$, which is currently the best known  bound.
We will need two estimates involving exponents of simultaneous approximation and algebraic approximation.
For any $n\geq 1$ and transcendental real $\zeta$, Davenport and Schmidt~\cite[Lemma~1]{davsh} showed the estimate
\begin{equation} \label{eq:schmidt}
w_{n}^{\ast}(\zeta)\geq \frac{1}{\widehat{\lambda}_{n}(\zeta)}.
\end{equation}
Similarly we have
\begin{equation} \label{eq:ichdeb}
\widehat{w}_{n}^{\ast}(\zeta)\geq \frac{1}{\lambda_{n}(\zeta)},
\end{equation}
which is the main result of~\cite{j2}. Further
we notice that due to a transference argument of Davenport and Schmidt~\cite{davsh},
at least in \eqref{eq:schmidt}
we may choose algebraic integers of degree $n+1$ instead of the algebraic numbers of degree $n$
within the definition of the exponent $w_{n}^{\ast}$. By this we mean that for any $\epsilon>0$ the inequality
\[
\vert \zeta-\alpha\vert \leq H(\alpha)^{-1/\lambda_{n}(\zeta)-1+\epsilon}
\]
has infinitely many algebraic integer solutions $\alpha$ of degree $n+1$. 

\subsection{Mahler's classification} \label{mala}
We end the introduction with Mahler's classification of real transcendental numbers, which will
appear at some places in the paper. For $m\geq 1$ an integer, 
a real transcendental number $\zeta$ is called a $U_{m}$-number 
if $w_{m}(\zeta)=\infty$ and $m$ is the smallest such index. The set of $U$-numbers is the 
union of the sets of $U_{m}$-numbers over $m\geq 1$. The set of $U_{1}$-numbers is also called Liouville numbers.
The existence of $U_{m}$-numbers for arbitrary $m\geq 1$ was established by LeVeque~\cite{leveque}.
A real transcendental number is called a $T$-number if $\limsup_{n\to\infty} w_{n}(\zeta)/n=\infty$ but
$w_{n}(\zeta)<\infty$ for all $n\geq 1$. Schmidt~\cite{tnumbers} showed the existence of $T$-numbers.
Finally a transcendental real number $\zeta$ is called an $S$-number if $\limsup_{n\to\infty} w_{n}(\zeta)/n<\infty$.
It is easy to see that the classes $S,T,U$ form a partition of the set of real transcendental numbers.

\section{New results} \label{newr} 

\subsection{Upper bounds for $\widehat{\lambda}_{n}$} \label{hauptres}

The main and most general result of the paper links certain exponents from Section~\ref{sek1}
in different dimensions.

\begin{theorem} \label{neu}
Let $m,n$ be positive integers and $\zeta$ be a real transcendental number. Then 
\begin{equation}  \label{eq:glm}
\widehat{\lambda}_{m+n-1}(\zeta)\leq \max \left\{ \frac{1}{w_{m}(\zeta)},\frac{1}{\widehat{w}_{n}(\zeta)}\right\}.
\end{equation}
\end{theorem}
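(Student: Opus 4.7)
The plan is to reformulate the claim via Mahler's duality \eqref{eq:alignzwei} and then to exhibit an explicit family of polynomials witnessing the dual inequality. Setting $N:=m+n-1$ in \eqref{eq:alignzwei} and taking $j=N+1$, we obtain $w_{N,N+1}(\zeta)=1/\widehat{\lambda}_{N,1}(\zeta)=1/\widehat{\lambda}_{m+n-1}(\zeta)$. Hence \eqref{eq:glm} is equivalent to the lower estimate
\[
w_{m+n-1,\,m+n}(\zeta)\ \geq\ \min\{w_m(\zeta),\,\widehat{w}_n(\zeta)\}.
\]
Fixing any $\eta<\min\{w_m(\zeta),\widehat{w}_n(\zeta)\}$, it therefore suffices to construct, for arbitrarily large $X$, a family of $m+n$ linearly independent integer polynomials of degree at most $N$, each of height at most $X$ and value at $\zeta$ at most $X^{-\eta}$; multiplicative constants can be absorbed into an arbitrarily small decrease in $\eta$.

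The natural choice of polynomials proceeds as follows. From $w_m(\zeta)>\eta$, for arbitrarily large $X$ there exists an integer polynomial $P$ of degree at most $m$ with $H(P)\leq X$ and $|P(\zeta)|\leq X^{-\eta}$. From $\widehat{w}_n(\zeta)>\eta$, for every sufficiently large $X$ there exists an integer polynomial $Q$ of degree at most $n$ with $H(Q)\leq X$ and $|Q(\zeta)|\leq X^{-\eta}$. Thus at arbitrarily large $X$ both $P$ and $Q$ are available simultaneously, and I would then consider the family
\[
T^{i}P(T),\ 0\leq i\leq n-1,\qquad T^{j}Q(T),\ 0\leq j\leq m-1,
\]
consisting of $m+n$ polynomials of degree at most $N$, all of height at most $X$ and with values at $\zeta$ bounded by $|\zeta|^{\max(n-1,m-1)}X^{-\eta}\leq X^{-\eta+\varepsilon}$ for $X$ large.

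The critical step, which I expect to be the main obstacle, is showing that these $m+n$ polynomials are linearly independent over $\mathbb{Q}$. By the Sylvester/resultant calculation, the map $(A,B)\mapsto AP+BQ$ with $\deg A\leq n-1$ and $\deg B\leq m-1$ is a bijection onto the space of polynomials of degree at most $N$ exactly when $\deg P=m$, $\deg Q=n$ and $\gcd(P,Q)=1$ in $\mathbb{Q}[T]$. The cases where $\deg P<m$ or $\deg Q<n$ are in fact easier, since then top-degree monomial multiples in one family compensate for any redundancy with the other. The core difficulty is therefore ensuring coprimality of $P$ and $Q$. I would exploit the flexibility given by $w_m(\zeta)>\eta$, which supplies infinitely many eligible $P$ as the scale varies; since $Q$ has only finitely many monic divisors in $\mathbb{Q}[T]$, a systematic common-factor obstruction would force some fixed divisor $D$ of $Q$ to divide infinitely many of those $P$. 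Since $\zeta$ is transcendental we have $|D(\zeta)|>0$, so Gelfond's inequality controlling $H(P/D)$ in terms of $H(P)$ would convert such a $P$ into an integer-polynomial approximant to $\zeta$ of strictly smaller degree, eventually yielding either a contradiction or a reduction of the problem to lower dimension. Running this coprimality argument while preserving the height and value estimates, so that the $\eta$-loss remains arbitrarily small, is where the real work of the proof will lie.
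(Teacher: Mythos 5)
Your overall architecture coincides with the paper's: dualize via \eqref{eq:alignzwei} to the statement $w_{m+n-1,\,m+n}(\zeta)\geq\min\{w_m(\zeta),\widehat{w}_n(\zeta)\}$ and witness it with the $m+n$ polynomials $T^iP,\ T^jQ$. However, the two points you defer as ``the real work'' are precisely the content of the proof, and your sketches for them do not go through. First, the degenerate-degree cases are not ``easier'': if $\deg P<m$ and $\deg Q<n$ hold simultaneously, then $PQ$ lies both in the span of $\{T^iP\}_{i\le n-1}$ and in the span of $\{T^jQ\}_{j\le m-1}$, so the family you wrote down is genuinely linearly dependent. The paper removes this obstacle by (a) replacing $Q$ of degree $d<n$ by $T^{n-d}Q$ to force exact degree $n$ (harmless for height and value), and (b) replacing $m$ by the smallest $l$ with $w_l(\zeta)=w_m(\zeta)$, which guarantees via \eqref{eq:hoehe} that the optimal approximants of degree at most $l$ have exact degree $l$, and then invoking the monotonicity $\widehat{\lambda}_{m+n-1}(\zeta)\le\widehat{\lambda}_{l+n-1}(\zeta)$.

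Second, your coprimality argument is structurally flawed: $Q$ is not a fixed polynomial with finitely many divisors --- it is chosen anew at each scale $X$, so there is no ``fixed divisor $D$ of $Q$'' that could divide infinitely many of the $P$'s. The paper's mechanism is different and exploits the asymmetry of the two exponents: by Wirsing's observation (a consequence of \eqref{eq:hoehe}) one may take $P$ \emph{irreducible} with $H(P)=X$, and one then requests $Q$ from the \emph{uniform} exponent $\widehat{w}_n$ at the slightly smaller scale $Y=X/(2K(n))$. If $Q$ were a polynomial multiple of $P$, then \eqref{eq:hoehe} would force $H(Q)\geq H(P)/K(n)>Y$, a contradiction; irreducibility of $P$ upgrades ``$P\nmid Q$'' to coprimality, and the resultant argument applies. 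It is the uniformity of $\widehat{w}_n$ that allows $Q$ to be placed at a scale just below $H(P)$, which is what makes coprimality come essentially for free; without this ordering of the heights your approach has no way to rule out $P\mid Q$.
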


In fact every result in the present Section~\ref{hauptres}
will be a rather immediate consequence of Theorem~\ref{neu}, partly in combination 
with some known results from Section~\ref{sek1}.
First observe that the claim of the theorem 
can be reformulated
\[
\min\{ w_{m}(\zeta),\widehat{w}_{n}(\zeta) \} \leq \frac{1}{\widehat{\lambda}_{m+n-1}(\zeta)}.
\]
In view of \eqref{eq:ldiri}, this refines~\cite[Theorem 2.3]{buschl} which asserted 
\begin{equation} \label{eq:unte}
\min\{ w_{m}(\zeta),\widehat{w}_{n}(\zeta) \} \leq m+n-1.
\end{equation}
Notice that if $m=n$, 
in view of the trivial relations \eqref{eq:ldiri} and \eqref{eq:wmono}, we infer the following.

\begin{corollary} \label{neues}
Let $n$ be a positive integer and $\zeta$ be a real transcendental number. Then
\[
\widehat{\lambda}_{n}(\zeta)\leq \frac{1}{\widehat{w}_{\lceil n/2\rceil}(\zeta)}.
\]
\end{corollary}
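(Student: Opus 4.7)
The plan is to specialize Theorem~\ref{neu} by taking its two parameters equal in order to cover odd dimensions, and then to reduce the even-dimensional case to the odd-dimensional one via the monotonicity \eqref{eq:ldirich} of $\widehat{\lambda}_n$ in $n$. The key enabling observation is the elementary inequality $w_k(\zeta) \geq \widehat{w}_k(\zeta)$ implied by \eqref{eq:wmono} (together with \eqref{eq:ldiri} applied in the polynomial setting), which will collapse the maximum on the right-hand side of \eqref{eq:glm} into a single clean term.

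Concretely, for odd $n = 2k-1$ with $k \geq 1$, I would apply Theorem~\ref{neu} with both indices $m$ and $n$ (in the notation of that theorem) set equal to $k$; this gives $m + n - 1 = 2k - 1$ on the left, while the right-hand side becomes $\max\{1/w_k(\zeta), 1/\widehat{w}_k(\zeta)\}$, which equals $1/\widehat{w}_k(\zeta) = 1/\widehat{w}_{\lceil n/2 \rceil}(\zeta)$ by the observation above. For even $n = 2k$, I would first use \eqref{eq:ldirich} to bound $\widehat{\lambda}_{2k}(\zeta) \leq \widehat{\lambda}_{2k-1}(\zeta)$, and then invoke the odd case just established in dimension $2k - 1$; since $\lceil 2k/2 \rceil = k = \lceil (2k-1)/2 \rceil$, the target exponent on the right is unchanged and the claim follows. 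There is no serious obstacle here, as the corollary is a direct specialization; the only thing requiring attention is the parity bookkeeping of the ceiling function and a careful check that the trivial estimate $w_k \geq \widehat{w}_k$ is the only ingredient needed to eliminate one term from the maximum.
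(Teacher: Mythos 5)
Your proposal is correct and matches the paper's own derivation: the author likewise obtains the odd case by setting $m=n=\lceil n/2\rceil$ in Theorem~\ref{neu} and collapsing the maximum via $w_k(\zeta)\geq\widehat{w}_k(\zeta)$ from \eqref{eq:wmono}, with the even case following from the monotonicity \eqref{eq:ldirich}. Your parity bookkeeping with the ceiling function is accurate, so nothing further is needed.
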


Again from \eqref{eq:wmono} we see that Corollary~\ref{neues}
confirms Laurent's estimate \eqref{eq:laurent}. In fact our proof of Theorem~\ref{neu}
will be simpler and shorter than the one in~\cite{laurent},
and relies on a duality argument between the simultaneous approximation problem and the
polynomial approximation problem, hidden in \eqref{eq:alignzwei} employed in the proof. 
Although the methods differ essentially, the polynomials
$P_{i}$ defined on page 49 in~\cite{laurent} derived from the best approximation vectors
are connected with the polynomials implied by the definition of $\widehat{w}_{n}(\zeta)$ 
involved in Theorem~\ref{neu}. See also Remark~\ref{schauher}.
On the other hand, in the ''usual'' case $\widehat{w}_{n}(\zeta)=n$, 
Corollary~\ref{neues} does not allow for improving \eqref{eq:laurent} for a given $n$. 
However, in Theorem~\ref{lauverb} below we will find an improvement for even $n$ via combination 
with \eqref{eq:umform}.

\begin{theorem} \label{lauverb}
Let $n$ be a positive integer and define the cubic polynomial
\begin{equation} \label{eq:polynom}
P_{n}(T)=T^{3}+\frac{n-1}{n}T^{2}+(2n-1)T+\frac{-2n+1}{n}.
\end{equation}
There exists a unique real root $\theta_{n}$ of $P_{n}$
in the interval $(0,1/n)$, and for $\zeta$ any real transcendental number we have
\[
\widehat{\lambda}_{2n}(\zeta)\leq \theta_{n}.
\]
\end{theorem}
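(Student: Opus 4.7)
The plan is to combine Theorem~\ref{neu} with the Schmidt--Summerer inequality \eqref{eq:umform} and Khintchine's transference \eqref{eq:khintchine}. Existence and uniqueness of $\theta_n$ is immediate from $P_n'(T) = 3T^2 + 2(n-1)T/n + (2n-1)$ having negative discriminant for every $n \geq 1$, so that $P_n$ is strictly increasing on $\mathbb{R}$; since $P_n(0) = -(2n-1)/n < 0$ while $P_n(1/n) = 1/n^2 > 0$, there is a unique real root $\theta_n$, necessarily lying in $(0, 1/n)$.

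Write $\widehat{\lambda} := \widehat{\lambda}_{2n}(\zeta)$. I would apply Theorem~\ref{neu} with the parameters $m$ and $n$ in its statement set to $n$ and $n+1$ respectively (so that $m + n - 1 = 2n$), obtaining
\[
\widehat{\lambda} \leq \max\bigl\{1/w_n(\zeta),\, 1/\widehat{w}_{n+1}(\zeta)\bigr\},
\]
and split according to which term realises the maximum. In the easier case $\widehat{w}_{n+1}(\zeta) \leq 1/\widehat{\lambda}$, the bound $\widehat{w}_{n+1}(\zeta) \geq n+1$ from \eqref{eq:wmono} gives $\widehat{\lambda} \leq 1/(n+1)$, and a direct computation shows $P_n(1/(n+1)) = -(2n^2 + 2n - 1)/(n+1)^3 < 0$, so $1/(n+1) < \theta_n$ by monotonicity of $P_n$ and the case is settled.

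The main case is $w_n(\zeta) \leq 1/\widehat{\lambda}$. Here I would chain three inputs: \eqref{eq:umform} in dimension $2n$, yielding
\[
\lambda_{2n}(\zeta) \geq \frac{\widehat{\lambda}(\widehat{\lambda} + 2n - 2)}{(2n-1)(1 - \widehat{\lambda})};
\]
the monotonicity $\lambda_n(\zeta) \geq \lambda_{2n}(\zeta)$ from \eqref{eq:ldirich}; and the upper bound in \eqref{eq:khintchine} in dimension $n$, rearranged to $w_n(\zeta) \geq n\lambda_n(\zeta) + n - 1$. Inserting these into the hypothesis $w_n(\zeta) \leq 1/\widehat{\lambda}$ and clearing denominators (all positive, since $\widehat{\lambda} \leq 1/n$ by Laurent's estimate \eqref{eq:laurent}) simplifies after routine algebra to
\[
n\widehat{\lambda}^3 + (n-1)\widehat{\lambda}^2 + n(2n-1)\widehat{\lambda} - (2n-1) \leq 0,
\]
which is precisely $n \cdot P_n(\widehat{\lambda}) \leq 0$. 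Hence $\widehat{\lambda} \leq \theta_n$ by monotonicity of $P_n$.

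The main obstacle, once the overall scheme is apparent, is selecting the correct pairing in Theorem~\ref{neu}: an apparently symmetric alternative such as $(m,n) = (n+1, n)$ would route through Khintchine's transference in dimension $n+1$ and produce a strictly weaker cubic with a larger root, while the choice $(n, n+1)$ makes the denominators $2n-1$ coming from the Schmidt--Summerer bound in dimension $2n$ match perfectly with the Khintchine transference in dimension $n$. The entire improvement over Laurent's bound $2/n$ is encoded in this asymmetry.
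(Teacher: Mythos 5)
Your proposal is correct and follows essentially the same route as the paper: the same application of Theorem~\ref{neu} with parameters $(n,n+1)$, the same case split on which term of the maximum dominates, and the same combination of \eqref{eq:umform} in dimension $2n$ with Khintchine's inequality \eqref{eq:khintchine} in dimension $n$. The only difference is cosmetic: you substitute directly into \eqref{eq:umform} and read off $P_n(\widehat{\lambda}_{2n}(\zeta))\leq 0$, whereas the paper first solves \eqref{eq:umform} for $\widehat{\lambda}_{2n}$ (obtaining \eqref{eq:wepper}) and locates the optimum by equating two monotone bounds in $w_n(\zeta)$ — your derivation of the cubic is, if anything, a bit more direct.
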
 

It can be shown that for large $n$ the bound $\theta_{n}$ is of order 
\[
\theta_{n}=\frac{1}{n}-\frac{1}{2n^{3}}-\frac{1}{4n^{4}}+O(n^{-5}).
\]
Hence the improvement to \eqref{eq:laurent} is only of order $n^{-3}$. However,
Theorem~\ref{lauverb} yields the first improvement of \eqref{eq:laurent} and the first 
improvement for even $n$ of Davenport and Schmidt's result~\cite{davsh} from 1969.
Cardano's formula yields a closed expression for $\theta_{n}$ in terms of $n$ only, 
however it is too complicated to be useful.
For small $n$ we calculate the first few bounds
\[
\widehat{\lambda}_{2}(\zeta)\leq \theta_{1}\approx 0.6823, \;
\widehat{\lambda}_{4}(\zeta)\leq \theta_{2}\approx 0.4395, \;
\widehat{\lambda}_{6}(\zeta)\leq \theta_{3}\approx 0.3140, \; 
\widehat{\lambda}_{8}(\zeta)\leq \theta_{4}\approx 0.2417.
\]
For $n=1$ the bound is relatively close to the optimal value $\sigma$, see Section~\ref{sek1}.
It should also be pointed out that the bound for $\widehat{\lambda}_{4}(\zeta)$ is significantly
stronger than $1/2$ from \eqref{eq:laurent}, even though
$\widehat{\lambda}_{4}(\zeta)\leq \widehat{\lambda}_{3}(\zeta)\leq \rho$
inferred from \eqref{eq:roybound} is still slightly better. 
Thus Theorem~\ref{lauverb} yields a refinement of the best known upper 
bound for $\widehat{\lambda}_{n}$ for even $n\geq 6$. Finally we remark
that the claim of Theorem~\ref{lauverb} could be slightly improved assuming the conjectured
improved estimates from~\cite{sums} for the discrepancy between $\lambda_{n}(\zeta)$
and $\widehat{\lambda}_{n}(\zeta)$ by Schmidt and Summerer. Assuming the conjecture is true, 
with the formula~\cite[(30)]{ichglasgow} some
computation performed with Mathematica yields approximately the first few bounds
\[
\widehat{\lambda}_{2}(\zeta)\leq 0.6823, \qquad
\widehat{\lambda}_{4}(\zeta)\leq 0.4292, \qquad \widehat{\lambda}_{6}(\zeta)\leq 0.3084,
\qquad \widehat{\lambda}_{8}(\zeta)\leq 0.2387.
\]
For $n=2$ the bounds naturally coincide because in this case \eqref{eq:umform} equals 
the estimate for the regular graph. 
Recall that $n=4$ is the smallest index for which the conjecture has not been verified yet,
and note that the bound for $\widehat{\lambda}_{4}$ is still larger than $\rho$ from \eqref{eq:roybound}.
We expect by no means that the bound in Theorem~\ref{lauverb} is optimal for any $n$.

Theorem~\ref{neu}, together with \eqref{eq:ogerman}, gives some information on 
the combined spectra of $(\widehat{\lambda}_{n})_{n\geq 1}$
and $(\widehat{w}_{n})_{n\geq 1}$. We only state the consequence of the case $m=n$ explicitly.

\begin{corollary} \label{ko}
Let $n\geq 2$ be an integer and $\zeta$ be a real transcendental number. Then
\begin{equation}  \label{eq:puha}
\widehat{\lambda}_{2n-1}(\zeta)\leq \frac{1-\widehat{\lambda}_{n}(\zeta)}{n-1}
\end{equation}
and
\begin{equation} \label{eq:har}
\widehat{w}_{2n-1}(\zeta)\leq \frac{\widehat{w}_{n}(\zeta)}{\widehat{w}_{n}(\zeta)-2n+2}, \qquad\quad
\text{if} \;\; \widehat{w}_{n}(\zeta)>2n-2.
\end{equation}
\end{corollary}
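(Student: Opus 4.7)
The plan is to derive both inequalities by combining Theorem~\ref{neu} (in the diagonal case $m=n$) with the two halves of German's inequality \eqref{eq:ogerman}. Setting $m=n$ in Theorem~\ref{neu} and using the trivial bound $w_n(\zeta)\geq \widehat{w}_n(\zeta)$ from \eqref{eq:wmono}, the maximum on the right-hand side is attained at $1/\widehat{w}_n(\zeta)$. This gives the core estimate
\begin{equation} \label{eq:koplan}
\widehat{\lambda}_{2n-1}(\zeta)\leq \frac{1}{\widehat{w}_n(\zeta)},
\end{equation}
from which both \eqref{eq:puha} and \eqref{eq:har} will follow by elementary algebraic manipulation.

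For \eqref{eq:puha}, the idea is to use the upper bound in \eqref{eq:ogerman}, namely $\widehat{\lambda}_n(\zeta)\leq (\widehat{w}_n(\zeta)-n+1)/\widehat{w}_n(\zeta)$. Rearranging, this is equivalent to $1-\widehat{\lambda}_n(\zeta)\geq (n-1)/\widehat{w}_n(\zeta)$, so
\[
\frac{1}{\widehat{w}_n(\zeta)}\leq \frac{1-\widehat{\lambda}_n(\zeta)}{n-1}.
\]
Chaining this with \eqref{eq:koplan} yields \eqref{eq:puha} at once.

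For \eqref{eq:har}, the approach is symmetric but uses the opposite half of \eqref{eq:ogerman}, applied in dimension $2n-1$ rather than $n$. That is,
\[
\widehat{\lambda}_{2n-1}(\zeta)\geq \frac{\widehat{w}_{2n-1}(\zeta)-1}{(2n-2)\widehat{w}_{2n-1}(\zeta)}.
\]
Combined with \eqref{eq:koplan}, this gives $\widehat{w}_n(\zeta)\bigl(\widehat{w}_{2n-1}(\zeta)-1\bigr)\leq (2n-2)\widehat{w}_{2n-1}(\zeta)$, which rearranges to $\widehat{w}_{2n-1}(\zeta)\cdot\bigl(\widehat{w}_n(\zeta)-2n+2\bigr)\leq \widehat{w}_n(\zeta)$. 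The hypothesis $\widehat{w}_n(\zeta)>2n-2$ ensures that the factor multiplying $\widehat{w}_{2n-1}(\zeta)$ is strictly positive, so dividing through preserves the inequality and produces \eqref{eq:har}.

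There is no real obstacle here; everything reduces to book-keeping once Theorem~\ref{neu} and German's inequalities are in hand. The positivity hypothesis in \eqref{eq:har} is not an accident of the method but precisely the threshold at which the derived linear inequality in $\widehat{w}_{2n-1}(\zeta)$ can be solved in the correct direction, consistent with the fact that \eqref{eq:har} would otherwise either become negative or give no information.
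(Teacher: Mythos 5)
Your proposal is correct and is exactly the argument the paper intends: the corollary is stated as the $m=n$ case of Theorem~\ref{neu} (giving $\widehat{\lambda}_{2n-1}(\zeta)\leq 1/\widehat{w}_{n}(\zeta)$ via \eqref{eq:wmono}) combined with the two halves of German's inequality \eqref{eq:ogerman}, applied in dimension $n$ for \eqref{eq:puha} and in dimension $2n-1$ for \eqref{eq:har}. The algebraic manipulations and the role of the positivity hypothesis $\widehat{w}_{n}(\zeta)>2n-2$ are all handled correctly.
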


The estimate \eqref{eq:puha} shows that $\widehat{\lambda}_{n}(\zeta)$ and $\widehat{\lambda}_{2n-1}(\zeta)$
cannot both be very close to the respective upper bound in \eqref{eq:laurent}. 
Observe that this claim does not follow from Theorem~\ref{lauverb} when $n$ is odd.
However, the improvement
is at most of order $n^{-2}$. For every $n$, the estimate \eqref{eq:puha} is stronger than the trivial
estimate $\widehat{\lambda}_{2n-1}(\zeta)\leq \widehat{\lambda}_{n}(\zeta)$ from \eqref{eq:ldirich} when
$\widehat{\lambda}_{n}(\zeta)>1/n$ strictly.
We highlight the case $n=2$ of \eqref{eq:puha}, where we obtain
\begin{equation} \label{eq:dreii}
\widehat{\lambda}_{2}(\zeta)+\widehat{\lambda}_{3}(\zeta) \leq 1.
\end{equation}
This estimate slightly improves the trivial bound $\sigma+\rho\approx 1.0425$ from
combination of \eqref{eq:zwei} and \eqref{eq:roybound}.
Recall that the spectrum of $\widehat{\lambda}_{2}$ is dense 
in $[1/2,\sigma]$ as mentioned in Section~\ref{sek1}, so there exist many numbers for which \eqref{eq:dreii}
is relevant. From the existence of extremal numbers,
for which $\widehat{\lambda}_{2}(\zeta)=\sigma$ holds, and \eqref{eq:ldiri},
it is also clear that $\sigma+1/3\approx 0.9513$ is the optimal bound one can find in \eqref{eq:dreii}
uniformly for all $\zeta$.
On the other hand, for extremal numbers we have $\widehat{\lambda}_{3}(\zeta)=1/3$, 
as was shown in~\cite{ichlondon}. The same conclusion holds more generally for all numbers $\zeta$ given by 
Sturmian continued fractions (which share the property $\widehat{w}_{2}(\zeta)>2$ see~\cite{buglau}), 
see~\cite{ichstw}. However, there exist examples of numbers that
satisfy $\widehat{\lambda}_{2}(\zeta)>1/2$, but whose continued fraction expansion is
not given by a Sturmian word. In fact any $\zeta$ which is no extremal number
and satisfies $\widehat{\lambda}_{2}(\zeta)>2-\sqrt{2}$ cannot
be given by a Sturmian continued fraction, see~\cite{cassaigne} and also~\cite{fisch1},\cite{fisch2},~\cite{buglau}. 
Since $1-(2-\sqrt{2})=\sqrt{2}-1\approx 0.4142<\rho$, the bound for
$\widehat{\lambda}_{3}(\zeta)$ inferred from \eqref{eq:dreii} improves \eqref{eq:roybound} in particular for any 
such number. 

Now we turn to \eqref{eq:har}.
The claim is most interesting for $n=2$. The condition in \eqref{eq:har} is then just $\widehat{w}_{2}(\zeta)>2$.
We again obtain some improvement for the bound for $\widehat{w}_{3}(\zeta)$ in \eqref{eq:stronger}
if $\widehat{w}_{2}(\zeta)$ is sufficiently close to $\gamma$, more
precisely when $\widehat{w}_{2}(\zeta)>(6+2\sqrt{2})/(2+\sqrt{2})\approx 2.5858$,
which is slightly smaller than $\gamma$. The best upper bound
for $\widehat{w}_{3}$ we can get this way is $2+\sqrt{5}\approx 4.2361$ (for extremal numbers), 
which happens to coincide with the value $w_{2}(\zeta)$ for extremal numbers $\zeta$.
However, for extremal numbers $\zeta$ again we know $\widehat{w}_{3}(\zeta)=3$ by~\cite{ichlondon}.
We further remark that $w_{2}(\zeta)>4$ implies the estimate
$\widehat{w}_{3}(\zeta)\leq 4$ inferred from~\cite[Theorem~2.3]{buschl}, or Theorem~\ref{neu}.
It is not known whether $\widehat{w}_{2}(\zeta)>2$ implies $w_{2}(\zeta)>4$, however this is true
for Sturmian continued fractions where in fact $w_{2}(\zeta)\geq 2+\sqrt{5}$ holds, with equality 
(only) for extremal numbers.
For $n\geq 3$, only provided that $\widehat{w}_{n}$ is very close to the bound in \eqref{eq:bugschl},
the claim \eqref{eq:har} yields an improvement to the bounds arising 
from \eqref{eq:bugschl} in dimension $2n-1$. 
However, we do not believe that the bounds in \eqref{eq:bugschl} are accurate
and in fact there is much doubt whether the condition
in \eqref{eq:har} can be satisfied at all for any $n\geq 3$.
Assuming a conjecture of Schmidt and Summerer~\cite{sums}, the condition always fails when $n\geq 10$,
see~\cite[Theorem~3.3]{ichglasgow}. 

Next we consider the case where $\zeta$ is a $U$-number, see Section~\ref{mala}.

\begin{corollary}
Let $m\geq 1$ be an integer and $\zeta$ be a $U_{m}$-number. Then  we have
\begin{align}
\widehat{\lambda}_{n}(\zeta)&\leq \frac{1}{m}, \qquad\qquad \quad\; m\leq n\leq 2m-1. \label{eq:a1} \\
\widehat{\lambda}_{n}(\zeta)&\leq \frac{1}{n-m+1}, \qquad n\geq 2m-1.  \label{eq:a2}
\end{align}
In particular we have $\widehat{\lambda}_{n}(\zeta)=1/n+O(n^{-2})$.
\end{corollary}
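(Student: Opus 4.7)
The plan is to derive the corollary directly from Theorem~\ref{neu}, exploiting the property of a $U_m$-number that $w_k(\zeta) = \infty$ for every $k \geq m$; this is immediate from $w_m(\zeta) = \infty$ together with the monotonicity \eqref{eq:wmonos}. To avoid clashing with the corollary's use of $m$ and $n$, let me denote the parameters appearing in Theorem~\ref{neu} by $(a,b)$, so the theorem reads $\widehat{\lambda}_{a+b-1}(\zeta) \leq \max\{1/w_a(\zeta), 1/\widehat{w}_b(\zeta)\}$.

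For \eqref{eq:a2}, I would apply Theorem~\ref{neu} with $(a,b) = (m, n-m+1)$, which is admissible whenever $n \geq m$. Since $w_m(\zeta) = \infty$, the first term of the maximum vanishes, and the Dirichlet-type bound \eqref{eq:wmono} gives
\[
\widehat{\lambda}_n(\zeta) \;\leq\; \frac{1}{\widehat{w}_{n-m+1}(\zeta)} \;\leq\; \frac{1}{n-m+1}.
\]
This is valid for every $n \geq m$; it improves on $1/m$ precisely when $n \geq 2m-1$, and the symmetric choice $(a,b)=(m,m)$ recovers the boundary case $\widehat{\lambda}_{2m-1}(\zeta) \leq 1/m$.

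For \eqref{eq:a1} in the remaining range $m \leq n < 2m-1$, Theorem~\ref{neu} alone is insufficient: any splitting $n = a+b-1$ forces either $a < m$ (so $w_a(\zeta)$ is only finite) or $b < m$ (so the bound $\widehat{w}_b(\zeta) \geq b$ is weaker than $m$). Here I would invoke the classical identity $\widehat{w}_m(\zeta) = m$ for $U_m$-numbers, equivalent via the observation recorded just after \eqref{eq:roybound} to $\widehat{\lambda}_m(\zeta) = 1/m$; the monotonicity $\widehat{\lambda}_n(\zeta) \leq \widehat{\lambda}_m(\zeta)$ for $n \geq m$ from \eqref{eq:ldirich} then propagates the bound $1/m$ throughout $m \leq n \leq 2m-1$. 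The asymptotic $\widehat{\lambda}_n(\zeta) = 1/n + O(n^{-2})$ follows from \eqref{eq:a2}, since $1/(n-m+1) = 1/n + (m-1)/n^2 + O(n^{-3})$ while $\widehat{\lambda}_n(\zeta) \geq 1/n$ by \eqref{eq:ldiri}.

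The delicate point is the auxiliary identity $\widehat{w}_m(\zeta) = m$ for $U_m$-numbers. I expect this to be available in \cite{bugbuch}; if a self-contained proof is required, it can be established by a resultant-based Liouville estimate, showing that a hypothetical polynomial $Q$ witnessing $\widehat{w}_m(\zeta) > m$ together with an ``exceptional'' polynomial $P$ furnished by $w_m(\zeta) = \infty$ (chosen with comparable height) must be $\mathbb{Q}$-linearly independent elements of the space of integer polynomials of degree $\leq m$, whose joint smallness at $\zeta$ contradicts $|\mathrm{Res}(P,Q)| \geq 1$ via a lower bound on $|Q(\alpha)|$ at the root $\alpha$ of $P$ closest to $\zeta$.
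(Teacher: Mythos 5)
Your proposal is correct and follows essentially the same route as the paper: \eqref{eq:a2} via Theorem~\ref{neu} with the splitting $(m,\,n-m+1)$ and $w_m(\zeta)=\infty$, and \eqref{eq:a1} via the identity $\widehat{w}_m(\zeta)=m$ for $U_m$-numbers (hence $\widehat{\lambda}_m(\zeta)=1/m$) combined with the monotonicity \eqref{eq:ldirich}. The only minor discrepancy is the source of that identity, which the paper takes from \cite[Corollary~2.5]{buschl} rather than \cite{bugbuch}; your resultant-based sketch of it is in the right spirit but unnecessary given the citation.
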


\begin{proof}
It was proved in~\cite[Corollary~2.5]{buschl} that a $U_{m}$-number satisfies $\widehat{w}_{m}(\zeta)=m$, and
thus $\widehat{\lambda}_{m}(\zeta)=1/m$. By the monotonicity \eqref{eq:ldirich} we infer \eqref{eq:a1}.
The stronger bounds \eqref{eq:a2} for $n\geq 2m-1$ follow from Theorem~\ref{neu} and \eqref{eq:wmono}
if we identify $m+n-1$ with the current $n$, such that $n$ in Theorem~\ref{neu}
corresponds to $n-m+1$ in the present notation. 
\end{proof}

\begin{remark} \label{germanchen}
German's transference inequality \eqref{eq:ogerman} is too weak to infer upper bounds for $\widehat{w}_{n}$
from \eqref{eq:a2}. For any $U_{m}$ number $\zeta$ we know the upper bounds
\begin{align} 
\widehat{w}_{n}(\zeta)&\leq m, \qquad\qquad\qquad 1\leq n\leq m,    \label{eq:huchen}     \\
\widehat{w}_{n}(\zeta)&\leq m+n-1, \qquad\; n\geq 1,           \label{eq:kuchen}
\end{align}
which is stronger than \eqref{eq:bugschl} in particular for $n\geq m+1$. 
Indeed \eqref{eq:huchen} follows from $\widehat{w}_{m}(\zeta)=m$ observed 
in the proof above and \eqref{eq:wmonos}, whereas \eqref{eq:kuchen} follows from
combination of $\widehat{w}_{n}^{\ast}(\zeta)\leq m$ for $U_{m}$ numbers, which is~\cite[Corollary~2.5]{buschl}, 
and \eqref{eq:sternnicht}. Conversely, the bound \eqref{eq:kuchen} in combination
with \eqref{eq:ogerman} only yields $\widehat{\lambda}_{n}(\zeta)\leq m/(n+m-1)$, 
which can be verified to be at most as strong as \eqref{eq:a1}, \eqref{eq:a2} for any choice $n\geq m\geq 1$,
and in fact even weaker than Laurent's uniform bound \eqref{eq:laurent} when $n\geq 5$ and $m\geq 3$.
\end{remark}

Observe that $m=1$ reproves the result already noticed in~\cite[Corollary~5.2]{schlei}
that Liouville numbers satisfy $\widehat{\lambda}_{n}(\zeta)=1/n$ 
for all $n\geq 1$.
We can readily combine Theorem~\ref{neu} with \eqref{eq:schmidt} to obtain the following
result, which we want to state as a theorem.  

\begin{theorem} \label{sternchen}
Let $m,n$ be positive integers and $\zeta$ be a real transcendental number. Then 
\[
w_{m+n-1}^{\ast}(\zeta)\geq \min\{ w_{m}(\zeta), \widehat{w}_{n}(\zeta)\}.
\]
In particular
\begin{equation}  \label{eq:glmned}
w_{n}^{\ast}(\zeta)\geq \widehat{w}_{\lceil n/2\rceil}(\zeta).
\end{equation}
The involved $\alpha$ from the definition of $w_{.}^{\ast}$ can be replaced by algebraic integers 
of degree increased by one in the sense of Section~\ref{sek1}. 
\end{theorem}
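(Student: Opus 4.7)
The plan is to derive Theorem~\ref{sternchen} as a direct consequence of Theorem~\ref{neu} combined with the Davenport--Schmidt estimate \eqref{eq:schmidt}. First I would apply \eqref{eq:schmidt} in dimension $m+n-1$, obtaining
\[
w_{m+n-1}^{\ast}(\zeta)\geq \frac{1}{\widehat{\lambda}_{m+n-1}(\zeta)}.
\]
Then Theorem~\ref{neu} supplies
\[
\widehat{\lambda}_{m+n-1}(\zeta)\leq \max\left\{\frac{1}{w_{m}(\zeta)},\frac{1}{\widehat{w}_{n}(\zeta)}\right\}=\frac{1}{\min\{w_{m}(\zeta),\widehat{w}_{n}(\zeta)\}},
\]
and chaining the two inequalities yields the main claim $w_{m+n-1}^{\ast}(\zeta)\geq \min\{w_{m}(\zeta),\widehat{w}_{n}(\zeta)\}$ at once.

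For the specialization \eqref{eq:glmned}, I would split on the parity of $n$. When $n=2k-1$ is odd, apply the first part of the theorem with both parameters equal to $k$; since $w_{k}(\zeta)\geq \widehat{w}_{k}(\zeta)$ by \eqref{eq:wmono}, the minimum equals $\widehat{w}_{k}(\zeta)=\widehat{w}_{\lceil n/2\rceil}(\zeta)$. When $n=2k$ is even, apply the first part with the pair $(k+1,k)$, noting $(k+1)+k-1=n$; combining \eqref{eq:wmono} with the monotonicity \eqref{eq:wmonos} gives $w_{k+1}(\zeta)\geq \widehat{w}_{k+1}(\zeta)\geq \widehat{w}_{k}(\zeta)$, so the minimum again collapses to $\widehat{w}_{k}(\zeta)=\widehat{w}_{\lceil n/2\rceil}(\zeta)$.

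The last assertion, about replacing $\alpha$ by algebraic integers of degree one higher, requires no extra work. As recalled at the end of Section~\ref{sek1}, the transference argument of Davenport and Schmidt yields a strengthened version of \eqref{eq:schmidt} in which $\alpha$ may be taken to be algebraic integers of degree increased by one. Substituting this refined inequality for \eqref{eq:schmidt} in the first step of the argument leaves the remainder untouched and delivers the stated extension. I do not foresee any substantial obstacle here; the proof collapses to chaining two results already at our disposal, the only mild subtlety being the parity case distinction in the specialization, which is nevertheless routine once the monotonicity relations \eqref{eq:wmono} and \eqref{eq:wmonos} are invoked.
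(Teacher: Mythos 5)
Your proposal is correct and is exactly the paper's argument: the paper derives Theorem~\ref{sternchen} by "readily combining" Theorem~\ref{neu} with \eqref{eq:schmidt} applied in dimension $m+n-1$, with the specialization \eqref{eq:glmned} following from \eqref{eq:wmono} and \eqref{eq:wmonos} just as you describe, and the algebraic-integer variant obtained by substituting the strengthened form of \eqref{eq:schmidt} recalled at the end of Section~\ref{sek1}.
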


Again \eqref{eq:glmned} does not yield any unconditioned improvement for the constant $w_{n}^{\ast}$
and is only of interest for rather large values of $\widehat{w}_{n}(\zeta)$.
Since $\widehat{w}_{n}(\zeta)\leq 2n-1$ in fact it does not imply Wirsing's conjecture for any $\zeta$.
It confirms Wirsing's estimate $w_{n}^{\ast}(\zeta)\geq (n+1)/2$ for odd $n$,
which he deduced from the stronger inequality $w_{n}^{\ast}(\zeta)\geq (w_{n}(\zeta)+1)/2$. For even $n$
it yields the slightly weaker bound $w_{n}^{\ast}(\zeta)\geq n/2$. We have discussed
the best currently known lower bounds for $w_{n}^{\ast}$ in Section~\ref{sek1}.  

To finish this section we want to present a condition upon which a slightly 
stronger estimate, involving higher successive minima, than in Theorem~\ref{neu} holds, when $m=n$. 
The proof is very similar to the one of Theorem~\ref{neu} and will be carried out in Section~\ref{proofs}.

\begin{theorem} \label{conditio}
Let $n\geq 1$ be an integer and $\zeta$ be a real transcendental number. 
Assume that
\begin{equation} \label{eq:bedin}
w_{n-1}(\zeta)< w_{n,2}(\zeta). 
\end{equation}
Then 
\begin{equation}  \label{eq:glmneda}
\widehat{\lambda}_{2n-1}(\zeta)\leq \frac{1}{w_{n,2}(\zeta)}.
\end{equation}
\end{theorem}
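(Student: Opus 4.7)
The plan is to adapt the strategy used for Theorem~\ref{neu}, exploiting Mahler's duality to translate the uniform simultaneous approximation statement into one about the last successive minimum of the polynomial approximation problem. Concretely, specializing \eqref{eq:alignzwei} with $(n,j)\to(2n-1,2n)$ yields $w_{2n-1,2n}(\zeta)=1/\widehat{\lambda}_{2n-1}(\zeta)$, so \eqref{eq:glmneda} is equivalent to
\[
w_{2n-1,2n}(\zeta)\geq w_{n,2}(\zeta).
\]
Thus, for any $\epsilon>0$ and arbitrarily large $X$, I must produce $2n$ linearly independent integer polynomials of degree at most $2n-1$, of height $\ll X$, whose value at $\zeta$ is $\ll X^{-w_{n,2}(\zeta)+\epsilon}$.

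The raw material is supplied by the definition of $w_{n,2}$: for arbitrarily large $X$ there are two $\mathbb{Q}$-linearly independent polynomials $P_{1},P_{2}\in\mathbb{Z}[T]$ of degree $\leq n$ with $H(P_{i})\leq X$ and $|P_{i}(\zeta)|\leq X^{-w_{n,2}(\zeta)+\epsilon}$. The natural candidate family is
\[
\{\,T^{i}P_{j}:0\leq i\leq n-1,\ j\in\{1,2\}\,\},
\]
whose members obviously satisfy the height and approximation bounds up to constants. A Sylvester-matrix type computation shows that these $2n$ polynomials are $\mathbb{Q}$-linearly independent if and only if $\gcd(P_{1},P_{2})=1$ and $\max\{\deg P_{1},\deg P_{2}\}=n$; the heart of the proof is that the hypothesis \eqref{eq:bedin} forces both of these.

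The degree condition is immediate: if both $P_{i}$ had degree $\leq n-1$ they would directly witness $w_{n-1,2}(\zeta)\geq w_{n,2}(\zeta)-\epsilon$, hence $w_{n-1}(\zeta)\geq w_{n,2}(\zeta)-\epsilon$ via the trivial $w_{n-1,2}\leq w_{n-1}$, contradicting \eqref{eq:bedin}. For coprimality, assume by contradiction that $g:=\gcd(P_{1},P_{2})\in\mathbb{Z}[T]$ has $d:=\deg g\geq 1$, and write $P_{i}=g\,R_{i}$ with $\gcd(R_{1},R_{2})=1$; as $R_{1},R_{2}$ are $\mathbb{Q}$-linearly independent and at least one $R_{i}$ is non-constant, one has $d\leq n-1$. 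Passing to a subsequence of $X$'s, set $\alpha:=\lim\log H(g)/\log X\in[0,1]$ and $\beta:=\lim -\log|g(\zeta)|/\log X\in[0,\infty]$. By Gelfond's lemma $H(R_{i})\ll X^{1-\alpha}$, while $|R_{i}(\zeta)|=|P_{i}(\zeta)|/|g(\zeta)|\ll X^{-w_{n,2}(\zeta)+\epsilon+\beta}$, and $g$ itself is a degree-$d$ polynomial with $H(g)\asymp X^{\alpha}$ and $|g(\zeta)|\asymp X^{-\beta}$. In the main regime $0<\alpha<1$, the pair $R_{1},R_{2}$ gives $w_{n-1,2}(\zeta)\geq (w_{n,2}(\zeta)-\beta)/(1-\alpha)$ and $g$ gives $w_{d}(\zeta)\geq \beta/\alpha$; combined with \eqref{eq:wmonos} and $d\leq n-1$, this forces
\[
w_{n-1}(\zeta)\geq\max\left\{\frac{w_{n,2}(\zeta)-\beta}{1-\alpha},\,\frac{\beta}{\alpha}\right\}\geq w_{n,2}(\zeta),
\]
the last inequality being saturated at $\beta=\alpha\,w_{n,2}(\zeta)$. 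This contradicts \eqref{eq:bedin}.

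The hardest part is controlling the boundary regimes. When $\alpha=0$, boundedness of $H(g)$ forces $g$ into a finite set, so $|g(\zeta)|$ is bounded below, $\beta=0$, and the $R_{i}$ alone reproduce the argument of the main regime. When $\alpha=1$, Gelfond's lemma only bounds $H(R_{i})$ by a constant, and one instead passes to a further subsequence along which $R_{1},R_{2}$ are constant; then $|R_{i}(\zeta)|$ is a fixed positive number, forcing $\beta\geq w_{n,2}(\zeta)-\epsilon$, and $g$ itself is then a polynomial of degree $\leq n-1$ with $H(g)\asymp X$ and $|g(\zeta)|\ll X^{-w_{n,2}(\zeta)+\epsilon}$, already giving $w_{n-1}(\zeta)\geq w_{n,2}(\zeta)-\epsilon$ and contradicting \eqref{eq:bedin} once more. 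Once coprimality and the degree condition are established, the $2n$ polynomials $T^{i}P_{j}$ are $\mathbb{Q}$-linearly independent, yielding $w_{2n-1,2n}(\zeta)\geq w_{n,2}(\zeta)-O(\epsilon)$; letting $\epsilon\to 0$ and invoking Mahler's identity completes the proof.
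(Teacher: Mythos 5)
Your proof is correct and takes essentially the same route as the paper: reduce via Mahler's identity \eqref{eq:alignzwei} to showing $w_{2n-1,2n}(\zeta)\geq w_{n,2}(\zeta)$, then obtain $2n$ linearly independent polynomials $T^{i}P_{j}$ of degree at most $2n-1$ after using \eqref{eq:bedin} together with the multiplicativity of heights \eqref{eq:hoehe} to force coprimality and exact degree $n$. The paper's coprimality step is shorter (it shows each $P_{i}$ is irreducible of exact degree $n$, so linear independence immediately gives coprimality), which sidesteps your case analysis on $\alpha$, whose boundary cases are stated a bit loosely ($\alpha=0$ or $\alpha=1$ do not literally force bounded heights) but are easily repaired.
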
 

Note that for any transcendental real $\zeta$ of
and any positive integer $n$ we have
\begin{equation} \label{eq:fritz}
w_{n,2}(\zeta)\geq \widehat{w}_{n}(\zeta).
\end{equation}
This is a consequence of the fact that two successive best approximations are clearly linearly independent.
More generally $w_{n,j+1}(\zeta)\geq \widehat{w}_{n,j}(\zeta)$ for $1\leq j\leq n$
if $\zeta$ is not algebraic of degree $n$ or less, which follows immediately from~\cite[Theorem~1.1]{ss}. 
Hence \eqref{eq:glmneda} is indeed stronger than \eqref{eq:glm}. Moreover \eqref{eq:fritz} shows that the condition 
\eqref{eq:bedin} is weaker than
\begin{equation} \label{eq:wika}
w_{n-1}(\zeta)< \widehat{w}_{n}(\zeta),
\end{equation}
which is in turn satisfied if $w_{n-1}(\zeta)< n$ in view of \eqref{eq:wmono}. In general we expect strict inequality
in \eqref{eq:fritz}. On the other hand equality can occur, non-trivial
examples are given by extremal numbers $\zeta$ and $n=2$.
Clearly the analogue of Theorem~\ref{sternchen} holds under the condition \eqref{eq:bedin} for the same reason.
Finally we remark that Theorem~\ref{conditio} is in general false without
the condition \eqref{eq:bedin}, since we can even have $w_{n,2}(\zeta)=\infty$, any
Liouville number $\zeta$ has this property, but by \eqref{eq:ldiri} the left hand side in \eqref{eq:glmneda} is at least $(2n-1)^{-1}$.

\subsection{Upper bounds for $\lambda_{n}$}

We want to establish similar bounds for the exponents $\lambda_{n}(\zeta)$. 
The exponents $\lambda_{n}$ may take the value $+\infty$, which is true (precisely) for all
Liouville numbers $\zeta$, see~\cite[Corollary 1]{bug}. Thus we have to make some assumptions. Concretely 
we will need the technical condition \eqref{eq:wika} to guarantee the coprimeness of certain polynomials 
in the proofs. Our most general result is the following.

\begin{theorem} \label{bedthm}
Let $n\geq 2$ be an integer and $\zeta$ be a real transcendental number.
Assume \eqref{eq:wika} holds. Let $m$ be another positive integer. Then
\[
\lambda_{m+n-1}(\zeta)\leq \max\left\{\frac{w_{n}(\zeta)}{\widehat{w}_{m}(\zeta)\widehat{w}_{n}(\zeta)},
\frac{1}{\widehat{w}_{n}(\zeta)}\right\}.
\]
\end{theorem}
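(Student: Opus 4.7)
The plan is to apply Mahler's identity \eqref{eq:aligneins} with $j=1$ and $n$ replaced by $m+n-1$, which reformulates the target bound as the equivalent lower estimate
\[
\widehat{w}_{m+n-1,\,m+n}(\zeta)\;\geq\;\min\!\left\{\frac{\widehat{w}_m(\zeta)\widehat{w}_n(\zeta)}{w_n(\zeta)},\; \widehat{w}_n(\zeta)\right\}.
\]
It therefore suffices, given any $\epsilon>0$ and every sufficiently large real $X$, to exhibit $m+n$ linearly independent integer polynomials of degree at most $m+n-1$ and height at most $X$ whose value at $\zeta$ does not exceed $X^{-\eta+O(\epsilon)}$, with $\eta$ the right-hand side above. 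The overall strategy mirrors that of Theorem~\ref{neu}, but the scale of one auxiliary family is tuned in a way that makes $w_n(\zeta)$ appear.

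Concretely, I would first use the uniform exponent $\widehat{w}_n(\zeta)$ to select $Q\in\mathbb{Z}[T]$ of degree at most $n$ with $H(Q)\leq X$ and $|Q(\zeta)|\leq X^{-\widehat{w}_n(\zeta)+\epsilon}$, and form the $m$ shifts $Q, TQ, \ldots, T^{m-1}Q$; these have degree at most $m+n-1$, height at most $X$, and value $\ll X^{-\widehat{w}_n(\zeta)+\epsilon}$ at $\zeta$. Next, introduce the auxiliary scale $Y := X^{\widehat{w}_n(\zeta)/w_n(\zeta)}$, which by \eqref{eq:wmono} satisfies $Y\leq X$, and use the uniform exponent $\widehat{w}_m(\zeta)$ to pick $P\in\mathbb{Z}[T]$ of degree at most $m$ with $H(P)\leq Y$ and $|P(\zeta)|\leq Y^{-\widehat{w}_m(\zeta)+\epsilon}$. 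The $n$ shifts $P, TP, \ldots, T^{n-1}P$ then have degree at most $m+n-1$, height at most $Y\leq X$, and value $\ll X^{-\widehat{w}_m(\zeta)\widehat{w}_n(\zeta)/w_n(\zeta)+O(\epsilon)}$ at $\zeta$. The minimum of the two exponents produced this way matches the required $\eta$.

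The main obstacle is verifying that the $m+n$ polynomials so constructed are linearly independent over $\mathbb{Q}$. Since the first family spans $Q\cdot\mathbb{Q}[T]_{<m}$ and the second spans $P\cdot\mathbb{Q}[T]_{<n}$, independence is equivalent to $P$ and $Q$ being coprime in $\mathbb{Q}[T]$. This is precisely where hypothesis \eqref{eq:wika} enters. A common factor $R := \gcd(P,Q)$ of positive degree $d\leq n-1$ would, via the Gelfond--Mahler inequality controlling the heights of polynomial factors together with the multiplicative estimate $|P(\zeta)|=|R(\zeta)|\cdot|(P/R)(\zeta)|$ and the analogous one for $Q$, yield an integer polynomial of degree at most $n-1$ with $|R(\zeta)|\ll H(R)^{-\widehat{w}_n(\zeta)+O(\epsilon)}$, contradicting $w_{n-1}(\zeta)<\widehat{w}_n(\zeta)$ for $\epsilon$ small enough. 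A coprimeness argument of this flavor is already implicit in~\cite{buschl}. Once linear independence is secured, letting $\epsilon\to 0$ and dualizing back via \eqref{eq:aligneins} delivers the theorem.
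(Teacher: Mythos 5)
Your overall strategy is the same as the paper's: dualize via \eqref{eq:aligneins} to the lower bound $\widehat{w}_{m+n-1,m+n}(\zeta)\geq\min\{\widehat{w}_m\widehat{w}_n/w_n,\widehat{w}_n\}$, build the two shifted families $Q,TQ,\dots,T^{m-1}Q$ and $P,TP,\dots,T^{n-1}P$ from a degree-$n$ polynomial at scale $X$ and a degree-$m$ polynomial at the smaller scale $Y\approx X^{\widehat{w}_n/w_n}$, and reduce everything to coprimality of $P$ and $Q$. However, your coprimality argument has a genuine gap: you only treat a common factor $R=\gcd(P,Q)$ of degree $d\leq n-1$, but since $\deg P\leq m$, when $m\geq n$ (the case needed, e.g., for Corollary~\ref{tzja}) nothing prevents $d=n$, i.e.\ $Q$ dividing $P$. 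In that case $P\cdot\mathbb{Q}[T]_{<n}\subseteq Q\cdot\mathbb{Q}[T]_{<m}$ and your $m+n$ polynomials are dependent, and the Gelfond--Mahler factorization argument gives no contradiction because the "bad" factor then has degree $n$, not $\leq n-1$, so \eqref{eq:wika} is not violated. (A smaller imprecision: for $d\leq n-1$ the factorization estimate only shows that \emph{one of} $R$ and $Q/R$ satisfies $|\cdot(\zeta)|\ll H(\cdot)^{-\widehat{w}_n+O(\epsilon)}$, not necessarily $R$; but both have degree $\leq n-1$, so this is harmless.)

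The paper closes exactly this case by a height comparison, and that is also where the precise choice of $Y$ matters. From $|Q(\zeta)|\leq X^{-\widehat{w}_n+\epsilon}$ and $|Q(\zeta)|\geq H(Q)^{-w_n-\epsilon}$ one gets $H(Q)\geq X^{(\widehat{w}_n-\epsilon)/(w_n+\epsilon)}$; taking $Y=X^{\mu}$ with $\mu=(\widehat{w}_n-2\epsilon)/(w_n+\epsilon)$ \emph{strictly} below this exponent, any $P$ with $H(P)\leq Y$ cannot be a multiple of $Q$ by \eqref{eq:hoehe}. With your choice $Y=X^{\widehat{w}_n/w_n}$ exactly, the inequality $(\widehat{w}_n-\epsilon)/(w_n+\epsilon)<\widehat{w}_n/w_n$ runs the wrong way and the comparison is inconclusive. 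So you need two repairs: first invoke \eqref{eq:wika} to make $Q$ irreducible of exact degree $n$ (reducing coprimality to excluding $Q\mid P$), and second shrink $Y$ by an $\epsilon$-power of $X$ so the height argument excludes that divisibility; the loss in the final exponent is only $O(\epsilon)$ and disappears in the limit.
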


The estimate complements the unconditioned bound
\begin{equation} \label{eq:schalecht}
\lambda_{m+n-1}(\zeta)\leq \lambda_{n}(\zeta)\leq \frac{w_{n}(\zeta)-n+1}{n}
\end{equation}
from Khintchine's transference inequalities \eqref{eq:khintchine} and \eqref{eq:ldirich}. Of course it would be 
interesting to know whether the assumption \eqref{eq:wika} can be dropped, or at least weakened. 
We discuss the cases $m\leq n$ and $m>n$ separately. We start with the case $m\leq n$.

\begin{theorem} \label{refthm}
Keep the assumptions and notation of Theorem~\ref{bedthm} and assume $m\leq n$.
Then
\begin{equation} \label{eq:hoemsch}
\lambda_{m+n-1}(\zeta)\leq \frac{w_{n}(\zeta)}{\widehat{w}_{m}(\zeta)\widehat{w}_{n}(\zeta)}.
\end{equation}
In particular
\begin{equation} \label{eq:speziale}
\lambda_{n}(\zeta)\leq \frac{w_{n}(\zeta)}{\widehat{w}_{n}(\zeta)},
\end{equation}
and
\begin{equation} \label{eq:spezial}
\lambda_{2n-1}(\zeta)\leq \frac{w_{n}(\zeta)}{\widehat{w}_{n}(\zeta)^{2}}.
\end{equation}
\end{theorem}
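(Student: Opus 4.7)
The plan is to derive Theorem~\ref{refthm} as an immediate specialization of Theorem~\ref{bedthm}, under the extra hypothesis $m\leq n$. The bound supplied by Theorem~\ref{bedthm} is the maximum of two quantities, $\frac{w_{n}(\zeta)}{\widehat{w}_{m}(\zeta)\widehat{w}_{n}(\zeta)}$ and $\frac{1}{\widehat{w}_{n}(\zeta)}$, and the first dominates the second precisely when $w_{n}(\zeta)\geq \widehat{w}_{m}(\zeta)$. So the main task is to verify this last inequality whenever $m\leq n$.

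To do this I would simply chain two elementary facts already recorded in Section~\ref{sek1}. By the monotonicity of the uniform polynomial exponents \eqref{eq:wmonos} one has $\widehat{w}_{m}(\zeta)\leq \widehat{w}_{n}(\zeta)$, and by \eqref{eq:wmono} one has $\widehat{w}_{n}(\zeta)\leq w_{n}(\zeta)$. Combining these gives $\widehat{w}_{m}(\zeta)\leq w_{n}(\zeta)$, identifying the first quantity as the larger of the two in the maximum of Theorem~\ref{bedthm}. Consequently \eqref{eq:hoemsch} drops out directly. Observe also that the technical hypothesis \eqref{eq:wika} required in Theorem~\ref{bedthm} involves only the index $n$, so it imposes no additional restriction when we specialize $m$.

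The two specializations are then cosmetic. For \eqref{eq:speziale} I would set $m=1$ in \eqref{eq:hoemsch}, producing the index $m+n-1=n$ and the factor $\widehat{w}_{1}(\zeta)$ in the denominator; here I would invoke the well-known identity $\widehat{w}_{1}(\zeta)=1$ for any irrational $\zeta$, which follows at once from the fact that the Dirichlet estimate $\vert a\zeta-b\vert\leq X^{-1}$ with $\max(|a|,|b|)\leq X$ cannot be improved uniformly in $X$, via the theory of continued fractions. For \eqref{eq:spezial} I would set $m=n$ in \eqref{eq:hoemsch}, so that $\widehat{w}_{m}(\zeta)=\widehat{w}_{n}(\zeta)$ and the left hand side becomes $\lambda_{2n-1}(\zeta)$, yielding exactly the stated bound.

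Since the whole argument reduces to Theorem~\ref{bedthm} together with the monotonicity relations \eqref{eq:wmono} and \eqref{eq:wmonos}, I do not anticipate any substantive obstacle. The only mildly delicate point is the brief justification of $\widehat{w}_{1}(\zeta)=1$ needed in the step \eqref{eq:speziale}, which is standard but has not been explicitly recorded earlier in the paper and deserves one line.
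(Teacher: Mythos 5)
Your argument is correct and coincides with the paper's (implicit) derivation of Theorem~\ref{refthm} from Theorem~\ref{bedthm}: under $m\le n$ the relations \eqref{eq:wmonos} and \eqref{eq:wmono} give $\widehat{w}_{m}(\zeta)\le \widehat{w}_{n}(\zeta)\le w_{n}(\zeta)$, so the first term of the maximum dominates, and the special cases are $m=1$ and $m=n$. The one point you flag as delicate, the identity $\widehat{w}_{1}(\zeta)=1$, is not actually needed: since $\widehat{w}_{1}(\zeta)$ appears in the denominator of an upper bound, the one-sided estimate $\widehat{w}_{1}(\zeta)\ge 1$ already recorded in \eqref{eq:wmono} suffices to deduce \eqref{eq:speziale}.
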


A minor calculation verifies that \eqref{eq:spezial}
provides a proper improvement (upon the assumption \eqref{eq:wika}) of 
\eqref{eq:schalecht} with $m=n$ unless $w_{n}(\zeta)=n$.
It was shown in~\cite[Theorem~2.2]{buschl} that $w_{n-1}(\zeta)<w_{n}(\zeta)$ implies
\begin{equation} \label{eq:aufmucken}
w_{n}(\zeta)\leq (n-1)\frac{\widehat{w}_{n}(\zeta)}{\widehat{w}_{n}(\zeta)-n}.
\end{equation}
Obviously the condition is weaker than \eqref{eq:wika}.
Hence incorporating \eqref{eq:aufmucken} 
in Theorem~\ref{bedthm} we infer bounds which depend solely on uniform constants.

\begin{theorem} \label{juck}
Keep the notation and assumptions of Theorem~\ref{bedthm} and assume $m\leq n$.
Then 
\[
\lambda_{m+n-1}(\zeta)\leq \frac{n-1}{(\widehat{w}_{n}(\zeta)-n)\widehat{w}_{m}(\zeta)}.
\]
In particular
\begin{equation} \label{eq:jucken}
\lambda_{2n-1}(\zeta)\leq \frac{n-1}{(\widehat{w}_{n}(\zeta)-n)\widehat{w}_{n}(\zeta)}.
\end{equation}
\end{theorem}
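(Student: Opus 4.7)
The plan is to combine Theorem~\ref{refthm}, which already gives the bound \eqref{eq:hoemsch} in terms of the ratio $w_{n}(\zeta)/\widehat{w}_{n}(\zeta)$, with the estimate \eqref{eq:aufmucken} from~\cite[Theorem~2.2]{buschl}, which bounds $w_{n}(\zeta)$ in terms of $\widehat{w}_{n}(\zeta)$ alone. The resulting estimate will then depend only on uniform constants, as claimed.

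First I would check that the hypothesis of \eqref{eq:aufmucken}, namely $w_{n-1}(\zeta)<w_{n}(\zeta)$, is implied by our running assumption \eqref{eq:wika}. Since $\widehat{w}_{n}(\zeta)\leq w_{n}(\zeta)$ by \eqref{eq:wmono}, the chain $w_{n-1}(\zeta)<\widehat{w}_{n}(\zeta)\leq w_{n}(\zeta)$ does the job; this is precisely the remark made just before the theorem's statement. Next I would split into two cases according to whether $\widehat{w}_{n}(\zeta)>n$ or $\widehat{w}_{n}(\zeta)=n$. In the latter case the claimed right-hand side equals $+\infty$ and there is nothing to prove. In the former, \eqref{eq:aufmucken} yields the finite estimate $w_{n}(\zeta)\leq (n-1)\widehat{w}_{n}(\zeta)/(\widehat{w}_{n}(\zeta)-n)$. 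Inserting this into \eqref{eq:hoemsch} and cancelling the common factor $\widehat{w}_{n}(\zeta)$ yields
\[
\lambda_{m+n-1}(\zeta)\leq \frac{w_{n}(\zeta)}{\widehat{w}_{m}(\zeta)\widehat{w}_{n}(\zeta)}\leq \frac{n-1}{(\widehat{w}_{n}(\zeta)-n)\widehat{w}_{m}(\zeta)},
\]
which is the desired bound. Specialising to $m=n$ (still compatible with the constraint $m\leq n$) recovers \eqref{eq:jucken}.

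Given that Theorem~\ref{refthm} and \eqref{eq:aufmucken} are both already available, no substantial obstacle arises: the argument reduces to a single substitution together with the cancellation described above. The only delicate point is the implication from the hypothesis \eqref{eq:wika} to the hypothesis of \eqref{eq:aufmucken}, which is why \eqref{eq:wika} (rather than the a priori weaker $w_{n-1}(\zeta)<w_{n}(\zeta)$) is the assumption propagated from Theorem~\ref{bedthm}; note that the stronger form \eqref{eq:wika} was needed anyway to invoke Theorem~\ref{refthm}.
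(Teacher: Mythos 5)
Your argument is correct and is exactly the route the paper takes: the paper derives Theorem~\ref{juck} by substituting the bound \eqref{eq:aufmucken} (whose hypothesis $w_{n-1}(\zeta)<w_{n}(\zeta)$ follows from \eqref{eq:wika} via \eqref{eq:wmono}) into \eqref{eq:hoemsch}. Your explicit handling of the degenerate case $\widehat{w}_{n}(\zeta)=n$, where the claimed bound is vacuous, is a reasonable precaution the paper leaves implicit.
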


\begin{remark} \label{her}
For $m=1$ we would obtain
\begin{equation} \label{eq:jor}
\lambda_{n}(\zeta)\leq \frac{n-1}{\widehat{w}_{n}(\zeta)-n}.
\end{equation}
However, this is trivially implied by (unconditioned) known facts on the exponents. 
The right hand side of \eqref{eq:jor} is larger than one by \eqref{eq:bugschl}, but on the other hand if 
$\lambda_{n}(\zeta)>1$ then $\lambda_{1}(\zeta)>2n-1$ and further $\widehat{w}_{n}(\zeta)=n$,
by a combination of~\cite[Theorem~1.6 and Theorem~1.12]{schlei}.
In other words $\widehat{w}_{n}(\zeta)>n$ implies $\lambda_{n}(\zeta)\leq 1$ anyway, 
which is indeed stronger than \eqref{eq:jor}. By the same argument
\eqref{eq:jucken} only yields something new in case of 
\[
\widehat{w}_{n}(\zeta)> \frac{n}{2}+\sqrt{\frac{n^{2}}{4}+n-1},
\]
which is of order $n+1-o(1)$. 
\end{remark}

The special case $n=2$ is of interest because then \eqref{eq:wika} is in particular satisfied whenever
$\widehat{w}_{2}(\zeta)>2$, and yields upper bounds for $\lambda_{3}(\zeta)$.

\begin{corollary} \label{orcor}
Let $\zeta$ be a real transcendental number. Assume that at least one of the two estimations
\begin{equation} \label{eq:or}
w_{1}(\zeta)<2, \qquad \text{or} \qquad \widehat{w}_{2}(\zeta)>2
\end{equation}
is satisfied. Then 
\[
\lambda_{3}(\zeta)\leq \min\left\{\frac{w_{2}(\zeta)}{\widehat{w}_{2}^{2}(\zeta)},1\right\}.
\]
In particular
\begin{equation} \label{eq:impro}
\lambda_{3}(\zeta)\leq \min\left\{\frac{1}{(\widehat{w}_{2}(\zeta)-2)\widehat{w}_{2}(\zeta)},1\right\},
\end{equation}
if we agree on $1/0=\infty$.
\end{corollary}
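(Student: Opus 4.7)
The plan is to derive Corollary~\ref{orcor} from Theorem~\ref{refthm} and Theorem~\ref{juck} in the symmetric case $m = n = 2$, supplemented by the bound $\lambda_3(\zeta) \leq 1$ extracted from Remark~\ref{her}. Both theorems require the hypothesis \eqref{eq:wika} for $n = 2$, namely $w_1(\zeta) < \widehat{w}_2(\zeta)$; verifying this under the disjunctive assumption \eqref{eq:or} is the heart of the argument.

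If $w_1(\zeta) < 2$, then \eqref{eq:wika} is immediate from $\widehat{w}_2(\zeta) \geq 2$, which is part of \eqref{eq:wmono}. In the other case, $\widehat{w}_2(\zeta) > 2$, I would bootstrap using Theorem~\ref{neu} itself with $m = 1$ and $n = 2$, combined with Jarn\'ik's identity \eqref{eq:jahrnik}: the resulting inequality $(\widehat{w}_2(\zeta) - 1)/\widehat{w}_2(\zeta) \leq \max\{1/w_1(\zeta), 1/\widehat{w}_2(\zeta)\}$ forces the maximum to be realized by the first term (the second would give $\widehat{w}_2(\zeta) \leq 2$), whence $w_1(\zeta) \leq \widehat{w}_2(\zeta)/(\widehat{w}_2(\zeta) - 1)$. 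Since $\widehat{w}_2(\zeta) > 2$, this upper bound is strictly less than $\widehat{w}_2(\zeta)$ (and in fact strictly less than $2$), which yields \eqref{eq:wika}. I expect this bootstrap of Theorem~\ref{neu} to be the only non-routine step in the proof.

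With \eqref{eq:wika} in hand, Theorem~\ref{refthm} with $m = n = 2$ delivers $\lambda_3(\zeta) \leq w_2(\zeta)/\widehat{w}_2(\zeta)^2$. The bound $\lambda_3(\zeta) \leq 1$ follows from Remark~\ref{her}: if $\lambda_3(\zeta) > 1$ then $w_1(\zeta) = \lambda_1(\zeta) > 5$, contradicting the bound $w_1(\zeta) < 2$ available in either case above. Combined, these yield the main estimate $\lambda_3(\zeta) \leq \min\{w_2(\zeta)/\widehat{w}_2(\zeta)^2, 1\}$. The \emph{in particular} bound \eqref{eq:impro} then follows from Theorem~\ref{juck} with $n = 2$, or equivalently by substituting \eqref{eq:aufmucken} (whose hypothesis $w_1 < w_2$ is weaker than \eqref{eq:wika} since $\widehat{w}_2 \leq w_2$) into the preceding inequality; the boundary case $\widehat{w}_2(\zeta) = 2$ is absorbed by the convention $1/0 = \infty$.
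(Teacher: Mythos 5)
Your proof is correct and follows essentially the same route as the paper: verify \eqref{eq:wika} for $n=2$ from either alternative in \eqref{eq:or}, then apply \eqref{eq:hoemsch} and \eqref{eq:jucken} with $m=n=2$, and obtain the bound $1$ from the results of \cite{schlei} quoted after Remark~\ref{her}. The only (cosmetic) difference is that for the case $\widehat{w}_{2}(\zeta)>2$ you combine Theorem~\ref{neu} with Jarn\'ik's identity \eqref{eq:jahrnik}, whereas the paper uses the simpler consequence \eqref{eq:unte} (i.e.\ Theorem~\ref{neu} with the trivial bound $\widehat{\lambda}_{2}(\zeta)\geq 1/2$) to conclude $w_{1}(\zeta)\leq 2<\widehat{w}_{2}(\zeta)$.
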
  

\begin{proof}
The left bounds follow from \eqref{eq:hoemsch} and \eqref{eq:jucken} respectively if we can 
show that both assumptions in \eqref{eq:or} individually
imply \eqref{eq:wika} for $n=2$, that is $\widehat{w}_{2}(\zeta)>w_{1}(\zeta)$.
In case of $w_{1}(\zeta)<2$ this is obvious since $w_{1}(\zeta)<2\leq \widehat{w}_{2}(\zeta)$ by \eqref{eq:wmono}. 
In the other case $\widehat{w}_{2}(\zeta)>2$, observe that \eqref{eq:unte} with $m=1,n=2$ implies
\[
\min\{ w_{1}(\zeta), \widehat{w}_{2}(\zeta)\}\leq 2,
\]
such that indeed $\widehat{w}_{2}(\zeta)>2$ implies $w_{1}(\zeta)\leq 2<\widehat{w}_{2}(\zeta)$. 
The upper bound $1$ follows from both assumptions (individually) by a
combination of~\cite[Theorem~1.6 and Theorem~1.12]{schlei}, see the explanations subsequent 
to Remark~\ref{her}.
\end{proof}

In particular upon the conditions of the corollary we have $\lambda_{3}(\zeta)\leq \min\{ w_{2}(\zeta)/4,1\}$, 
which improves $\lambda_{3}(\zeta)\leq \lambda_{2}(\zeta)\leq \min\{(w_{2}(\zeta)-1)/2,1\}$ 
derived from \eqref{eq:khintchine}. We check that \eqref{eq:impro} improves the bound $1$ when
$\widehat{w}_{2}(\zeta)>1+\sqrt{2}\approx 2.4142$, which is still reasonably smaller than $\gamma$.
We obtain the optimal bound in \eqref{eq:impro}
when $\zeta$ is an extremal number as defined in Section~\ref{sek1},
namely $\lambda_{3}(\zeta)\leq \sigma\approx 0.6180$.
The precise value $\lambda_{3}(\zeta)=1/\sqrt{5}\approx 0.4472$ for any extremal number $\zeta$
was established in \cite{ichlondon}.
For $\zeta$ any Sturmian continued fraction, which
also satisfy $\widehat{w}_{2}(\zeta)>2$, the precise values of $\lambda_{3}(\zeta)$ 
established in~\cite{ichstw} are also strictly smaller than the bounds resulting
from Corollary~\ref{orcor}. However, there are many other real numbers that satisfy $\widehat{w}_{2}(\zeta)>2$,
as carried out in Section~\ref{hauptres}. We further remark that the estimate
\begin{equation} \label{eq:unruhe}
\lambda_{3}(\zeta)\leq \frac{w_{2}(\zeta)}{\widehat{w}_{2}^{2}(\zeta)}
\end{equation}
can be false when \eqref{eq:or} is not satisfied. Bugeaud~\cite[Corollary~1]{bug}
constructed for given $n$ and any parameter $\lambda\geq 2n-1$ numbers 
$\zeta$ which satisfy $w_{1}(\zeta)=w_{2}(\zeta)=\cdots=w_{n}(\zeta)=\lambda$. He showed 
that these numbers satisfy $\lambda_{n}(\zeta)=(w_{n}(\zeta)-n+1)/n$, i.e. there is equality
in the right inequality \eqref{eq:khintchine}. For $n=3$ any parameter $\lambda>8$ leads to a contradiction
of \eqref{eq:unruhe}. Clearly also $\lambda_{3}(\zeta)\leq 1$ is in general false, by 
~\cite[Theorem~1.6]{schlei} the condition is equivalent to $\lambda_{1}(\zeta)\leq 5$.

Now we want to discuss the case $m>n$ of Theorem~\ref{bedthm}. 

\begin{corollary} \label{tzja}
Let $\zeta$ be a real transcendental number which satisfies \eqref{eq:or}
and is not a $U_{2}$-number. Then for 
sufficiently large $m\geq m_{0}(\zeta)$ we have
\begin{equation}  \label{eq:reizmich}
\lambda_{m}(\zeta)\leq \frac{1}{\widehat{w}_{2}(\zeta)}\leq \frac{1}{2}.
\end{equation}
In particular any irrational real number $\zeta$ which satisfies
$\lambda_{1}(\zeta)<2$ and $\lim_{n\to\infty} \lambda_{n}(\zeta)>1/2$ is a $U_{2}$-number.
More generally, if $n\geq 2$ is a fixed integer and $\zeta$ is a transcendental real number which
satisfies $w_{n-1}(\zeta)<n$ and $\lim_{m\to\infty} \lambda_{m}(\zeta)>1/n$,
then $\zeta$ is a $U_{n}$-number.
\end{corollary}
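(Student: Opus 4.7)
The plan is to derive Corollary~\ref{tzja} as an asymptotic reading of Theorem~\ref{bedthm} for the fixed dimension $n=2$ (and, for the final assertion, for general $n$), letting the other parameter tend to infinity. First I would verify the hypothesis~\eqref{eq:wika} for $n=2$, namely $w_{1}(\zeta)<\widehat{w}_{2}(\zeta)$. Either alternative in~\eqref{eq:or} delivers this: if $w_{1}(\zeta)<2$ then $\widehat{w}_{2}(\zeta)\geq 2>w_{1}(\zeta)$ by~\eqref{eq:wmono}, while if $\widehat{w}_{2}(\zeta)>2$ then~\eqref{eq:unte} applied with $(m,n)=(1,2)$ forces $w_{1}(\zeta)\leq 2<\widehat{w}_{2}(\zeta)$. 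In particular~\eqref{eq:or} already rules out $\zeta$ being a $U_{1}$-number.

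With the hypothesis secured, Theorem~\ref{bedthm} with $n=2$ and varying parameter $k$ yields
\[
\lambda_{k+1}(\zeta)\leq \max\left\{\frac{w_{2}(\zeta)}{\widehat{w}_{k}(\zeta)\widehat{w}_{2}(\zeta)},\;\frac{1}{\widehat{w}_{2}(\zeta)}\right\}.
\]
Since $\zeta$ is by hypothesis neither a $U_{1}$- nor a $U_{2}$-number, Mahler's classification gives $w_{2}(\zeta)<\infty$. Because $\widehat{w}_{k}(\zeta)\geq k$ by~\eqref{eq:wmono}, the first entry of the maximum tends to $0$ as $k\to\infty$, so for all sufficiently large $k$ the maximum equals $1/\widehat{w}_{2}(\zeta)$. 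This produces the bound $\lambda_{m}(\zeta)\leq 1/\widehat{w}_{2}(\zeta)\leq 1/2$ for all $m\geq m_{0}(\zeta)$, which is~\eqref{eq:reizmich}.

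For the second claim I would note that $\lambda_{1}$ coincides with $w_{1}$, so the assumption $\lambda_{1}(\zeta)<2$ is precisely the first alternative of~\eqref{eq:or}. The monotonicity~\eqref{eq:ldirich} guarantees that the limit of $\lambda_{m}(\zeta)$ exists; if $\zeta$ were not a $U_{2}$-number, the part already proved would force this limit to be at most $1/2$, a contradiction. The general assertion for $n\geq 2$ is treated identically: the assumption $w_{n-1}(\zeta)<n\leq \widehat{w}_{n}(\zeta)$, obtained via~\eqref{eq:wmono}, is exactly the hypothesis~\eqref{eq:wika}, and the monotonicity $w_{1}(\zeta)\leq\cdots\leq w_{n-1}(\zeta)<\infty$ together with the assumption that $\zeta$ is not a $U_{n}$-number gives $w_{n}(\zeta)<\infty$. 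Theorem~\ref{bedthm} and the same $k\to\infty$ analysis then yield $\lim_{m}\lambda_{m}(\zeta)\leq 1/\widehat{w}_{n}(\zeta)\leq 1/n$, contradicting the hypothesis $\lim_{m}\lambda_{m}(\zeta)>1/n$.

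No serious obstacle is expected: the entire argument is essentially Theorem~\ref{bedthm} read asymptotically, combined with careful bookkeeping of the Mahler class of $\zeta$. The only subtlety is the verification, in each subcase, that finiteness of $w_{n}(\zeta)$ holds, since this is what allows the first entry of the maximum in Theorem~\ref{bedthm} to be eventually absorbed into the desired bound $1/\widehat{w}_{n}(\zeta)$.
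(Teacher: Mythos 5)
Your argument is correct and coincides with the paper's own proof: both verify that \eqref{eq:or} implies \eqref{eq:wika} exactly as in Corollary~\ref{orcor}, then apply Theorem~\ref{bedthm} with the second parameter tending to infinity, using $\widehat{w}_{m}(\zeta)\geq m$ from \eqref{eq:wmono} and the finiteness of $w_{2}(\zeta)$ (resp.\ $w_{n}(\zeta)$) guaranteed by the exclusion of the relevant Mahler $U$-classes. Your write-up merely spells out the bookkeeping that the paper leaves implicit.
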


\begin{proof}
We have noticed in the proof of Corollary~\ref{orcor} that the assumption \eqref{eq:or} implies \eqref{eq:wika}.
Thus the claim \eqref{eq:reizmich} follows from Theorem~\ref{bedthm} 
for sufficiently large $m$ under the assumption $w_{2}(\zeta)<\infty$ since $\widehat{w}_{m}(\zeta)\geq m$ 
by \eqref{eq:wmono}. The more general claim follows similarly.
\end{proof}

We point out that $\zeta$ that satisfy the assumptions $\lambda_{1}(\zeta)<2$ and 
$\lim_{m\to\infty} \lambda_{m}(\zeta)>1/2$ were constructed by Bugeaud, more concretely
in~\cite[Theorem~4]{bug} he constructed $\zeta$ for which $\lambda_{m}(\zeta)=1$ for all $m\geq 1$. 
In view of Corollary~\ref{tzja} all these numbers have to be $U_{2}$-numbers.

We formulate another corollary of Theorem~\ref{bedthm}, 
which is not particularly strong as such but motivates a deeper study as we will carry out below. 

\begin{corollary} \label{mcgoerg}
Let $\zeta$ be a real transcendental numbers for which  \eqref{eq:wika} is satisfied
for infinitely many $n$. Then
\begin{equation} \label{eq:trutz}
\lim_{n\to\infty} \lambda_{n}(\zeta)=0.
\end{equation}
\end{corollary}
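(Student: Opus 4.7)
\textbf{Proof plan for Corollary~\ref{mcgoerg}.} The approach is to apply Theorem~\ref{bedthm} repeatedly, holding $n$ fixed at a suitable index from the hypothesized infinite set and letting $m$ tend to infinity. Before doing so I would rule out the possibility that $\zeta$ is a $U$-number: if $\zeta$ were a $U_k$-number, then $w_{n-1}(\zeta)=\infty$ for every $n\geq k+1$, so the strict inequality \eqref{eq:wika} could hold for at most finitely many $n$, contradicting the hypothesis. Hence $w_n(\zeta)<\infty$ for all $n\geq 1$, and in particular every quantity appearing on the right hand side of the bound in Theorem~\ref{bedthm} is finite.

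Let $\mathcal{N}$ denote the infinite set of integers $n$ for which \eqref{eq:wika} holds, and fix $\epsilon>0$. Using that $\widehat{w}_n(\zeta)\geq n$ by \eqref{eq:wmono} and that $\mathcal{N}$ is unbounded, I would pick $n\in\mathcal{N}$ with $1/\widehat{w}_n(\zeta)<\epsilon/2$. Holding this $n$ fixed, Theorem~\ref{bedthm} yields for every positive integer $m$ the inequality
\[
\lambda_{m+n-1}(\zeta)\leq \max\left\{\frac{w_n(\zeta)}{\widehat{w}_m(\zeta)\widehat{w}_n(\zeta)},\,\frac{1}{\widehat{w}_n(\zeta)}\right\}.
\]
Since $w_n(\zeta)$ and $\widehat{w}_n(\zeta)$ are now fixed finite constants while $\widehat{w}_m(\zeta)\geq m\to\infty$ by \eqref{eq:wmono}, the first argument of the maximum tends to $0$ as $m\to\infty$. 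Thus for all sufficiently large $m$ both arguments are at most $\epsilon$, so $\lambda_{m+n-1}(\zeta)\leq \epsilon$ for all such $m$. By the monotonicity in \eqref{eq:ldirich}, this forces $\lambda_N(\zeta)\leq \epsilon$ for every sufficiently large $N$, and since $\epsilon>0$ was arbitrary, \eqref{eq:trutz} follows.

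There is essentially no serious obstacle here, given Theorem~\ref{bedthm}. The only point requiring a brief argument is the exclusion of $U$-numbers in order to guarantee finiteness of $w_n(\zeta)$, and this follows immediately from the hypothesis as above. The key mechanism is that the hypothesis provides arbitrarily large indices $n$ where the bound of Theorem~\ref{bedthm} becomes useful, while the freedom to let $m$ grow afterwards drives the first branch of the maximum to zero.
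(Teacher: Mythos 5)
Your proof is correct and takes essentially the same route as the paper's: fix $n$ from the infinite set where \eqref{eq:wika} holds, let $m\to\infty$ in Theorem~\ref{bedthm} so that the first branch of the maximum vanishes by \eqref{eq:wmono}, leaving $\lambda_{m+n-1}(\zeta)\leq \widehat{w}_{n}(\zeta)^{-1}\leq 1/n$, and then let $n$ range over arbitrarily large values. Your explicit exclusion of $U$-numbers to guarantee $w_{n}(\zeta)<\infty$ is a small detail the paper leaves implicit, not a different argument.
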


\begin{proof}
For every $n$ which satisfies \eqref{eq:wika}, if we choose $m$ sufficiently large then Theorem~\ref{bedthm}
and \eqref{eq:wmono} imply $\lambda_{m+n-1}(\zeta)\leq \widehat{w}_{n}(\zeta)^{-1}\leq 1/n$. It remains to consider
arbitrarily large $n$.
\end{proof}

As indicated the claim is weak in the sense that upon the assumption \eqref{eq:wika}
the condition
\begin{equation} \label{eq:glei}
\lim_{n\to\infty} \frac{\widehat{w}_{n}(\zeta)}{n}=1
\end{equation}
already suffices for the deduction of \eqref{eq:trutz}, in view of \eqref{eq:khintchine}.
We strongly expect that the left hand side in \eqref{eq:glei} does not exceed $1$ for any real $\zeta$,
although we cannot even exclude that the limes inferior takes any value in $[1,2]$ by looking solely at the bounds 
in \eqref{eq:bugschl}. If we do not assume \eqref{eq:glei}, then
\eqref{eq:khintchine} is too weak to show \eqref{eq:trutz} even for certain $S$-numbers, 
more precisely it fails to decide the cases of numbers for which $\liminf_{n\to\infty} w_{n}(\zeta)/n>1$. 
Hence the assumption \eqref{eq:wika} alone still does not suffice to obtain \eqref{eq:trutz} for every $S$-number.

The main reason for the statement of Corollary~\ref{mcgoerg} is to motivate a deeper
investigation of the question how the condition \eqref{eq:wika} can be weakened or modified
to still obtain \eqref{eq:trutz}.

\begin{problem}
Find a weaker condition which implies \eqref{eq:trutz}.
Do we have \eqref{eq:trutz} for every $S$-number?
\end{problem}

If a $T$-number satisfies \eqref{eq:wika} for arbitrarily large $n$, then \eqref{eq:trutz} holds
since \eqref{eq:glei} follows from Theorem~\ref{neu} and was already established
in~\cite[Corollary~2.5]{buschl}. However, for $T$-numbers the condition \eqref{eq:wika} might be violated 
and most likely cannot be dropped for \eqref{eq:trutz} in general. 

Finally we combine Theorem~\ref{bedthm} with \eqref{eq:ichdeb}.

\begin{theorem} \label{dochin}
Keep the notation and assumptions of Theorem~\ref{bedthm}.
Then 
\begin{equation}  \label{eq:donaldtrump}
\widehat{w}_{m+n-1}^{\ast}(\zeta)\geq 
\min\left\{\frac{\widehat{w}_{m}(\zeta)\widehat{w}_{n}(\zeta)}{w_{n}(\zeta)},\widehat{w}_{n}(\zeta)\right\}
\end{equation}
and
\[
\widehat{w}_{m+n-1}^{\ast}(\zeta)\geq \min\left\{\frac{(\widehat{w}_{n}(\zeta)-n)\widehat{w}_{m}(\zeta)}{n-1},
\widehat{w}_{n}(\zeta)\right\}.
\]
In particular
\[
\widehat{w}_{2n-1}^{\ast}(\zeta)\geq \frac{\widehat{w}_{n}(\zeta)^{2}}{w_{n}(\zeta)}, \qquad 
\widehat{w}_{2n-1}^{\ast}(\zeta)\geq \frac{(\widehat{w}_{n}(\zeta)-n)\widehat{w}_{n}(\zeta)}{n-1}.
\]
\end{theorem}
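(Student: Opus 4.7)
The plan is to derive this statement as a direct dualization of Theorem~\ref{bedthm} via the inequality \eqref{eq:ichdeb} applied in dimension $m+n-1$, namely
\[
\widehat{w}_{m+n-1}^{\ast}(\zeta)\geq \frac{1}{\lambda_{m+n-1}(\zeta)}.
\]
Since the hypothesis \eqref{eq:wika} is in force, Theorem~\ref{bedthm} is available; its right-hand side is a maximum of two strictly positive quantities, so taking reciprocals swaps the maximum into a minimum and produces precisely the first claimed bound \eqref{eq:donaldtrump}.

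For the second inequality I would reshape the bound of Theorem~\ref{bedthm} using \eqref{eq:aufmucken}. That estimate requires only the condition $w_{n-1}(\zeta)<w_n(\zeta)$, which is automatic under \eqref{eq:wika} since $w_{n-1}(\zeta)<\widehat{w}_n(\zeta)\leq w_n(\zeta)$ by \eqref{eq:wmono}. Substituting \eqref{eq:aufmucken} into the first entry of the max gives
\[
\frac{w_n(\zeta)}{\widehat{w}_m(\zeta)\widehat{w}_n(\zeta)}\leq \frac{n-1}{(\widehat{w}_n(\zeta)-n)\widehat{w}_m(\zeta)},
\]
and another application of \eqref{eq:ichdeb} together with reciprocating delivers the second displayed lower bound.

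To obtain the ``in particular'' statements I would specialize to $m=n$ and verify in each case that the first entry of the minimum is the smaller one. For the first bound this is immediate from $w_n(\zeta)\geq \widehat{w}_n(\zeta)$ via \eqref{eq:wmono}. For the second, the universal estimate $\widehat{w}_n(\zeta)\leq 2n-1$ provided by \eqref{eq:bugschl} (or already by Davenport and Schmidt) yields $(\widehat{w}_n(\zeta)-n)/(n-1)\leq 1$, so that $(\widehat{w}_n(\zeta)-n)\widehat{w}_n(\zeta)/(n-1)\leq \widehat{w}_n(\zeta)$ and the first entry again attains the minimum. I do not anticipate any serious technical obstacle: the entire substance sits in Theorem~\ref{bedthm}, while the theorem under consideration is a clean combinatorial consequence of that result combined with \eqref{eq:ichdeb} and \eqref{eq:aufmucken}.
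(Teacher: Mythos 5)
Your proposal is correct and is exactly the argument the paper intends: the theorem is stated in the paper as an immediate combination of Theorem~\ref{bedthm} with \eqref{eq:ichdeb} (reciprocating the maximum into a minimum), with the second bound obtained via \eqref{eq:aufmucken} and the ``in particular'' cases settled by $w_n(\zeta)\geq\widehat{w}_n(\zeta)$ and $\widehat{w}_n(\zeta)\leq 2n-1$. No gaps.
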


We remark a certain consequence of Theorem~\ref{dochin}.
For $\zeta$ a Liouville number we have
\begin{equation} \label{eq:frit}
\widehat{w}_{m}^{\ast}(\zeta)=1, \qquad  \text{for all} \quad m\geq 1.
\end{equation}
This is an immediate consequence of~~\cite[Corollary~2.5]{buschl}, which more generally
asserted $\widehat{w}_{m}^{\ast}(\zeta)\leq n$ for any $U_{n}$-number $\zeta$ and all $m\geq 1$.
Theorem~\ref{dochin} shows a partial converse. We cannot have \eqref{eq:frit} when $w_{n-1}(\zeta)<n$ for some $n\geq 2$, 
unless possibly if $\zeta$
is a $U_{n}$-number. Indeed, if $w_{n-1}(\zeta)<n\leq \widehat{w}_{n}(\zeta)$,
the estimate \eqref{eq:donaldtrump} and \eqref{eq:wmono} yield 
\[
\widehat{w}_{m+n-1}^{\ast}(\zeta)\geq 
\min\left\{\frac{\widehat{w}_{m}(\zeta)\widehat{w}_{n}(\zeta)}{w_{n}(\zeta)},\widehat{w}_{n}(\zeta)\right\}\geq 
\min\left\{\frac{n}{w_{n}(\zeta)}\cdot m, n\right\}.
\]
The right hand side obviously equals $n\geq 2$ for large $m$ unless $w_{n}(\zeta)=\infty$,
which clearly prohibits \eqref{eq:frit}. For $U_{n}$-numbers, on the other hand, as remarked above we have
$\widehat{w}_{m}^{\ast}(\zeta)\leq n$ for all $m\geq 1$.

\section{Proofs} \label{proofs}

We will utilize the relation
\begin{equation} \label{eq:hoehe}
\frac{1}{K(n)}H(P)H(Q)\leq H(PQ)\leq K(n)H(P)H(Q)
\end{equation}
valid for all $P,Q\in\mathbb{Z}[T]$ of degree at most $n$ and a constant $K(n)$, as noticed 
by Wirsing~\cite{wirsing}. Hence if positive $\alpha,\beta$ are defined by 
\[
\vert P(\zeta)\vert =H(P)^{-\alpha}, \qquad \vert Q(\zeta)\vert =H(Q)^{-\beta}
\]
then 
\[
\vert P(\zeta)Q(\zeta)\vert \geq (H(P)H(Q))^{-\max\{\alpha,\beta\}} \gg_{n} H(PQ)^{-\max\{\alpha,\beta\}}.
\]
In particular it follows that we may consider the polynomials from the definition
of $w_{n}(\zeta)$ irreducible (but not necessarily of exact degree $n$), see~\cite[Hilfssatz~4]{wirsing}.
Moreover, if either $n=1$ or $w_{n}(\zeta)>w_{n-1}(\zeta)$, 
then the corresponding polynomials have to have degree precisely $n$.
This fact was already frequently used in \cite{buschl},
and will of significant importance in the subsequent proof of Theorem~\ref{neu}.

\begin{proof}[Proof of Theorem~\ref{neu}]
Let $l\in\{1,2,\ldots,m\}$ be the smallest integer such that $w_{l}(\zeta)=w_{m}(\zeta)$.
We will show
\begin{equation} \label{eq:nonedda}
w_{l+n-1,l+n}(\zeta)\geq \min\{ w_{l}(\zeta),\widehat{w}_{n}(\zeta)\}=\min\{ w_{m}(\zeta),\widehat{w}_{n}(\zeta)\}.
\end{equation}
By \eqref{eq:alignzwei} applied in dimension $l+n-1$ and with $j=l+n$, 
and the trivial estimates \eqref{eq:ldiri}, we indeed deduce
\[
\widehat{\lambda}_{m+n-1}(\zeta)\leq \widehat{\lambda}_{l+n-1}(\zeta)
\leq \frac{1}{\min\{w_{l}(\zeta),\widehat{w}_{n}(\zeta)\}}=
\max \left\{ \frac{1}{w_{m}(\zeta)},\frac{1}{\widehat{w}_{n}(\zeta)}\right\}.
\]

Let $\epsilon\in(0,1)$. As carried out above and by our definition of $l$, there exist
{\em irreducible} polynomials $P$ of degree precisely $l$ that satisfy
\begin{equation} \label{eq:numerouno}
H(P)=X, \qquad \vert P(\zeta)\vert \leq X^{-w_{l}(\zeta)+\epsilon}
\end{equation}
as in the definition of $w_{l}(\zeta)$. 
For $K(n)$ the constant in \eqref{eq:hoehe}, let
\[
Y=\frac{X}{2K(n)}.
\]
Then by definition of $\widehat{w}_{n}$ 
there must exist a solution $Q\in\mathbb{Z}[T]$ of degree at most $n$ to
\[
H(Q)\leq Y, \quad 
\vert Q(\zeta)\vert \leq Y^{-\widehat{w}_{n}(\zeta)+\epsilon}. 
\]
We may assume the degree $d$ of $Q$ is precisely $d=n$, otherwise replace $Q(T)$ by $T^{n-d}Q(T)$.
Together with the estimate $\widehat{w}_{n}(\zeta)\leq 2n-1$ implied by \eqref{eq:bugschl} and
$\epsilon<1$, we have
\begin{equation} \label{eq:numerodos}
H(Q)\leq Y, \qquad 
\vert Q(\zeta)\vert 
\leq (2K(n))^{2n}\cdot X^{-\widehat{w}_{n}(\zeta)+\epsilon}. 
\end{equation}
By construction and \eqref{eq:hoehe} this polynomial $Q$ cannot be a multiple of $P$. 
Hence, since $P$ is irreducible, the polynomials $P$ and $Q$ have no common factor over $\mathbb{Z}[T]$.
Moreover, any such pair
$(P,Q)$ satisfies the properties
\[
\max \{ H(P),H(Q)\} =X, \qquad 
\max \{ \vert P(\zeta)\vert,\vert Q(\zeta)\vert \}\leq (2K(n))^{2n}\cdot X^{-\widehat{w}_{n}(\zeta)+\epsilon}.
\]
Consider the set $\mathscr{P}$ of $l+n$ polynomials
\begin{equation} \label{eq:p}
\mathscr{P}=\{ P,TP,T^{2}P,\ldots,T^{n-1}P, Q, TQ,\ldots,T^{l-1}Q\}. 
\end{equation}
The determinant of the $(l+n)\times (l+n)$ matrix whose rows are given by the coefficients of the polynomials
in $\mathscr{P}$ is just the resultant of $P$ and $Q$, which is non-zero since we have noticed
that $P$ and $Q$ have no common factor. Hence $\mathscr{P}$ is linearly independent and spans
the space of polynomials of degree at most $l+n-1$.
Since any $R\in\mathscr{P}$ arises either from $P$ or $Q$ by multiplication of $T^{k}$ with $k\leq n-1$
from \eqref{eq:numerouno} and \eqref{eq:numerodos} we infer
\begin{equation} \label{eq:oppen}
H(R)\leq X, \qquad \vert R(\zeta)\vert \leq C(n,\zeta)\cdot X^{-\widehat{w}_{n}(\zeta)+\epsilon}
\end{equation}
with $C(n,\zeta)=\max\{1,\vert\zeta\vert^{2n-1}\}(2K(n))^{2n}$ 
for every $R\in\mathscr{P}$. Thus, and since $\mathscr{P}$ is linearly independent, 
we have established \eqref{eq:nonedda} as $\epsilon$
can be chosen arbitrarily small.
\end{proof}

\begin{remark} \label{schauher}
As indicated in Section~\ref{hauptres}, for $m=n$ the polynomials implicitly given 
by the definition of $\widehat{w}_{n}$ are closely related to the polynomials $P_{i}$ involved
in the proof by Laurent~\cite{laurent}. He relates these polynomials $P_{i}$ to the best approximation vectors
associated to $\zeta$ via Hankel matrices. 
\end{remark}

We carry out how to modify the proof to obtain Theorem~\ref{conditio}.

\begin{proof}[Proof of Theorem~\ref{conditio}]
By definition of $w_{n,2}$ there exist arbitrarily large $X$ such that 
\[
\max\{H(P),H(Q)\}=X, \qquad \max\{ \vert P(\zeta)\vert, \vert Q(\zeta)\vert\}\leq X^{-w_{n,2}(\zeta)+\epsilon}.
\]
It follows from \eqref{eq:hoehe} and the assumption \eqref{eq:bedin} that any such pair both
$P$ and $Q$ of sufficiently large heights $H(P),H(Q)$ must be irreducible of degree precisely $n$. 
Now we consider again the sets $\mathscr{P}$ as in \eqref{eq:p} with $l=n$ arising from these pairs to verify
\begin{equation} \label{eq:noneddard}
w_{2n-1,2n}(\zeta)\geq w_{n,2}(\zeta).
\end{equation}
This again implies the claim by \eqref{eq:alignzwei}.
\end{proof}

Now we prove Theorem~\ref{lauverb}. The proof employs the reformulation
\begin{equation} \label{eq:wepper}
\widehat{\lambda}_{n}(\zeta)\leq -\frac{n-2+(n-1)\lambda_{n}(\zeta)}{2}+
\sqrt{ \left(\frac{n-2+(n-1)\lambda_{n}(\zeta)}{2}\right)^{2}+(n-1)\lambda_{n}(\zeta)}
\end{equation}
of \eqref{eq:umform}, in dimension $2n$.

\begin{proof}[Proof of Theorem~\ref{lauverb}]
Application of Theorem~\ref{neu} with suitable $m,n$ yields
\[
\widehat{\lambda}_{2n}(\zeta)\leq \max\left\{\frac{1}{w_{n}(\zeta)},\frac{1}{\widehat{w}_{n+1}(\zeta)}\right\}.
\]
If we assume that the right hand side of the maximum is larger, then from \eqref{eq:wmono}
we infer the bound $1/(n+1)$ which can be readily checked to be smaller than $\theta_{n}$.
Hence we may assume
\begin{equation} \label{eq:landa}
\widehat{\lambda}_{2n}(\zeta)\leq \frac{1}{w_{n}(\zeta)}.
\end{equation}
On the other hand, combination of \eqref{eq:khintchine} and \eqref{eq:ldirich} gives
\begin{equation} \label{eq:zorn}
\lambda_{2n}(\zeta)\leq \lambda_{n}(\zeta)\leq \frac{w_{n}(\zeta)-n+1}{n}.
\end{equation}
For $k$ an integer define the function
\[
\Phi_{k}(x)=-\frac{k-2+(k-1)\frac{x-n+1}{n}}{2}+
\sqrt{ \left(\frac{k-2+(k-1)\frac{x-n+1}{n}}{2}\right)^{2}+(k-1)\frac{x-n+1}{n}}.
\]
We insert the right hand side of \eqref{eq:zorn}
in \eqref{eq:wepper} in dimension $2n$, and by definition of $\Phi_{2n}$ we obtain
\begin{equation} \label{eq:lammda}
\widehat{\lambda}_{2n}(\zeta)\leq \Phi_{2n}(w_{n}(\zeta)).
\end{equation}
Since the bound in \eqref{eq:landa} decays whereas 
$\Phi_{2n}$ increases as $w_{n}(\zeta)$ increases in the interval $[n,\infty]$,
and $\Phi_{2n}(n)<1/n$ follows easily from insertion in \eqref{eq:wepper}, 
the maximum upper bound is attained when the right hand sides of \eqref{eq:landa} and \eqref{eq:lammda}
are equal. Some rearrangements show that this is the case if $w_{n}(\zeta)$ is the root of
the cubic polynomial
\[
Q_{n}(T)= \frac{2n-1}{n}T^{3}+(-2n+1)T^{2}+\frac{1-n}{n}T-1
\]
in the interval $(n,\infty)$. Denote by $\overline{Q}_{n}$ the polynomial that 
arises from $Q_{n}$ above by reversing the order of the coefficients, which equals the polynomial $P_{n}$
in \eqref{eq:polynom} up to the factor $-1$. It is well known that $T^{3}Q_{n}(1/T)=\overline{Q}_{n}(T)$
and thus if $\{ \beta_{1},\beta_{2},\beta_{3}\}$ are the roots $Q_{n}$ in $\mathbb{C}$ then
the reciprocals $\{ \beta_{1}^{-1},\beta_{2}^{-1},\beta_{3}^{-1}\}$ are the roots of $\overline{Q}_{n}$ and $P_{n}$.
By \eqref{eq:landa}, one of these roots $\beta_{i}$ and $\beta_{i}^{-1}$ respectively, coincides
with $\theta_{n}^{-1}$ and $\theta_{n}$, respectively. The argument also shows that the bound 
$\theta_{n}$ is strictly smaller than $1/n$.
\end{proof}

We turn towards the bounds for the exponents $\lambda_{n}$.
The proof of Theorem~\ref{bedthm} essentially follows from the following lemma.

\begin{lemma} \label{lemur}
Keep the notation and assumptions of Theorem~\ref{bedthm}. 
Let
\[
v:=\frac{\widehat{w}_{m}(\zeta) \widehat{w}_{n}(\zeta)}{w_{n}(\zeta)}
\]
and 
\[
w:= 
\min\left\{v,\widehat{w}_{n}(\zeta)\right\}.
\]
Then, for any  $\varepsilon>0$ and every large $X\geq X_{0}(\varepsilon)$ we can find coprime polynomials $P$ and $Q$
of degrees at most $n$ and $m$ respectively, such that 
\begin{equation} \label{eq:baldfertig}
\max\{ H(P),H(Q)\}\leq X, \qquad 
\max\{ \vert P(\zeta)\vert, \vert Q(\zeta)\vert\}\leq X^{-w+\varepsilon}.
\end{equation}
\end{lemma}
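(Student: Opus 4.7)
The plan is to construct $P$ from the definition of $\widehat{w}_{n}(\zeta)$ at scale $X$ and $Q$ from the definition of $\widehat{w}_{m}(\zeta)$ at a strictly smaller scale $Y = X^{w/\widehat{w}_{m}(\zeta) - \delta}$, so that both polynomials meet the quality $X^{-w+\varepsilon}$ required by \eqref{eq:baldfertig}. Hypothesis \eqref{eq:wika} will force $P$ to be irreducible of degree exactly $n$, and a lower bound on $H(P)$ derived from $w_{n}(\zeta)$ will block the only possible obstruction to coprimeness, namely $P \mid Q$.

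First I would fix $\delta > 0$ small in terms of $\varepsilon$ (for instance $\delta = \varepsilon/(4\widehat{w}_{m}(\zeta))$) and set $Y = X^{w/\widehat{w}_{m}(\zeta) - \delta}$. Note $Y \leq X$ because $w \leq v \leq \widehat{w}_{m}(\zeta)$, the latter using $\widehat{w}_{n}(\zeta) \leq w_{n}(\zeta)$ from \eqref{eq:wmono}. By definition of $\widehat{w}_{n}(\zeta)$, for every large $X$ there is $P \in \mathbb{Z}[T]$ with $\deg P \leq n$, $H(P) \leq X$ and $|P(\zeta)| \leq X^{-\widehat{w}_{n}(\zeta) + \varepsilon/4}$. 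The factorisation argument recalled at the start of Section~\ref{proofs}, combined with hypothesis \eqref{eq:wika} and \eqref{eq:hoehe}, forces $P$ to be irreducible of degree exactly $n$ for $X$ large. Similarly, the definition of $\widehat{w}_{m}(\zeta)$ at the (also large) scale $Y$ yields $Q \in \mathbb{Z}[T]$ of degree at most $m$ with $H(Q) \leq Y$ and $|Q(\zeta)| \leq Y^{-\widehat{w}_{m}(\zeta) + \varepsilon/4}$. An elementary computation from the choice of $Y$ shows $|Q(\zeta)| \leq X^{-w+\varepsilon}$, while $|P(\zeta)| \leq X^{-w+\varepsilon}$ holds since $w \leq \widehat{w}_{n}(\zeta)$.

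It remains to establish coprimeness. If $m < n$ this is automatic, since $\deg Q < \deg P$ and $P$ is irreducible. If $m \geq n$, I would argue by contradiction. By the definition of $w_{n}(\zeta)$, only finitely many polynomials of degree at most $n$ satisfy $|P(\zeta)| \leq H(P)^{-w_{n}(\zeta) - \varepsilon/4}$; comparing with the upper bound on $|P(\zeta)|$ forces $H(P) \geq X^{\widehat{w}_{n}(\zeta)/w_{n}(\zeta) - o(1)}$ as $X \to \infty$. If $P \mid Q$, writing $Q = PR$ with $R \in \mathbb{Z}[T]$, inequality \eqref{eq:hoehe} gives
\[
H(R) \leq K(m)\, \frac{H(Q)}{H(P)} \leq K(m)\, X^{\, w/\widehat{w}_{m}(\zeta) - \delta - \widehat{w}_{n}(\zeta)/w_{n}(\zeta) + o(1)}.
\]
Since $w \leq v = \widehat{w}_{m}(\zeta)\widehat{w}_{n}(\zeta)/w_{n}(\zeta)$ rearranges to $w/\widehat{w}_{m}(\zeta) \leq \widehat{w}_{n}(\zeta)/w_{n}(\zeta)$, the exponent is at most $-\delta + o(1)$, which is negative for $X$ large. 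Then $H(R) < 1$ forces $R = 0$, contradicting $Q \neq 0$.

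The main obstacle will be the borderline case $w = v$, in which the lower bound $H(P) \geq X^{\widehat{w}_{n}(\zeta)/w_{n}(\zeta) - o(1)}$ and the upper bound $H(Q) \leq Y = X^{w/\widehat{w}_{m}(\zeta)}$ coincide up to sub-polynomial factors, so the naive choice $\delta = 0$ is not enough to drive $H(R)$ below $1$. Building in the small slack $\delta$ from the outset, at the price of a harmless loss of $O(\delta)$ in the quality of $Q$, is precisely what creates the strict inequality that rescues the coprimeness argument uniformly across all allowed parameter ranges.
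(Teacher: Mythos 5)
Your proposal is correct and follows essentially the same route as the paper: take $P$ from the definition of $\widehat{w}_{n}$ at scale $X$ (irreducible of exact degree $n$ by \eqref{eq:wika}), derive the lower bound $H(P)\geq X^{\widehat{w}_{n}(\zeta)/w_{n}(\zeta)-o(1)}$ from the definition of $w_{n}$, take $Q$ from $\widehat{w}_{m}$ at a scale $Y$ strictly below that threshold, and kill $P\mid Q$ via \eqref{eq:hoehe}. Your choice $Y=X^{w/\widehat{w}_{m}(\zeta)-\delta}$ is the same as the paper's $Y=X^{(\widehat{w}_{n}(\zeta)-2\epsilon)/(w_{n}(\zeta)+\epsilon)}$ up to the $\epsilon$-slack (since $w/\widehat{w}_{m}(\zeta)\leq\widehat{w}_{n}(\zeta)/w_{n}(\zeta)$); just make sure the slack in the quality demanded of $P$ is chosen small relative to $\delta$ so that the $o(1)$ term does not swallow $-\delta$, which your concluding paragraph already anticipates.
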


\begin{proof}
Let $\epsilon>0$ and $X\geq X_{0}(\epsilon)$ be a large parameter. 
Then by definition of $\widehat{w}_{n}$ there exists a polynomial $P\in\mathbb{Z}[T]$ 
of degree at most $n$ such that
\begin{equation} \label{eq:einteil}
H(P)\leq X, \qquad \vert P(\zeta)\vert \leq X^{-\widehat{w}_{n}(\zeta)+\epsilon}. 
\end{equation}
By assumption \eqref{eq:wika} we know every such $P$ must be irreducible of precise degree $n$.
On the other hand by definition of $w_{n}$ we have
\begin{equation} \label{eq:herxn}
\vert P(\zeta)\vert \geq H(P)^{-w_{n}(\zeta)-\epsilon}\geq X^{-w_{n}(\zeta)-\epsilon}. 
\end{equation}
Choose $\mu=(\widehat{w}_{n}(\zeta)-2\epsilon)/(w_{n}(\zeta)+\epsilon)$ and let $Y=X^{\mu}$.
Obviously $\mu<1$ and $Y<X$.
Then \eqref{eq:herxn} becomes
\begin{equation} \label{eq:piscis}
\vert P(\zeta)\vert \geq Y^{-\widehat{w}_{n}(\zeta)+2\epsilon}. 
\end{equation}
Now by definition of $\widehat{w}_{m}$ there must be a polynomial $Q\in\mathbb{Z}[T]$ 
of degree at most $m$ such that
\begin{equation} \label{eq:zweiteil}
H(Q)\leq Y, \qquad \vert Q(\zeta)\vert \leq Y^{-\widehat{w}_{m}(\zeta)+\epsilon}.
\end{equation}
We claim that $P,Q$ must be coprime. Since $P$ is irreducible
we just have to exclude that $Q$ is a polynomial multiple of $P$. Assume that otherwise $PR=Q$
for some $R\in\mathbb{Z}[T]$.
From combination of \eqref{eq:einteil} and \eqref{eq:herxn} it follows that
\[
H(P)\geq X^{\frac{\widehat{w}_{n}(\zeta)-\epsilon}{w_{n}(\zeta)+\epsilon}}=X^{\mu+\theta}
\]
where $\theta=\epsilon/(w_{n}(\zeta)+\epsilon)>0$. On the other hand $H(Q)\leq Y=X^{\mu}$. The assumption $PR=Q$ leads
to a contradiction in view of \eqref{eq:hoehe} if $X$ was chosen large enough. Thus the claim is shown.
Moreover combination of \eqref{eq:einteil} and \eqref{eq:zweiteil} yields 
\[
\max\{H(P),H(Q)\} \leq X, \qquad \max\{ \vert P(\zeta)\vert,\vert Q(\zeta)\vert\}\leq 
\max\{ X^{-\widehat{w}_{n}(\zeta)+\epsilon}, X^{\mu(-\widehat{w}_{m}(\zeta)+\epsilon)}\}.
\]
For the latter exponent in the right hand side we have
\[
\mu(\widehat{w}_{m}(\zeta)-\epsilon)\geq v-\epsilon_{1}
\]
for $\epsilon_{1}>0$ some variation of $\epsilon$ which tends to $0$ as $\epsilon$ does.
Thus for $\varepsilon=\max\{\epsilon,\epsilon_{1}\}$ the polynomials $P,Q$ have all desired properties
including \eqref{eq:baldfertig}.
\end{proof}

\begin{remark}
In particular for $m=n$ we obtain
\[
\widehat{w}_{n,2}(\zeta)\geq \frac{\widehat{w}_{n}(\zeta)^{2}}{w_{n}(\zeta)}.
\]
This refines $w_{n,3}(\zeta)\geq \widehat{w}_{n}(\zeta)^{2}/w_{n}(\zeta)$ from~\cite[Theorem~4.2]{ichglasgow}
upon the assumption of \eqref{eq:wika}.
Concerning the natural generalization
$\widehat{w}_{n,j}(\zeta)\geq \widehat{w}_{n}(\zeta)^{j}/w_{n}(\zeta)^{j-1}$ of \cite[Theorem~4.2]{ichglasgow}
for $j\geq 3$, to guarantee the linear independence of the involved polynomials becomes a serious problem.
\end{remark}

Now we can prove Theorem~\ref{bedthm}.

\begin{proof}[Proof of Theorem~\ref{bedthm}]

Let $w$ be as in Lemma~\ref{lemur}.
We have shown in Lemma~\ref{lemur} that for every $\epsilon>0$ and every large parameter $X$ there
exist two coprime polynomials $P,Q$ of degree at most $n$ and $m$ respectively such that
\eqref{eq:baldfertig} is satisfied.
Now we can again consider the set $\mathscr{P}$ as in \eqref{eq:p} arising from such $P,Q$,
which provides $m+n$ linearly independent polynomials of degree
at most $m+n-1$, and moreover \eqref{eq:baldfertig} implies
\[
H(R)\leq X, \qquad 
\vert R(\zeta)\vert\ll_{m,n,\zeta} X^{-w+\epsilon},
\]
for all $R\in\mathscr{P}$. Thus 
\[
\widehat{w}_{m+n-1,m+n}(\zeta)\geq w.
\]
Together with \eqref{eq:aligneins} in dimension $m+n-1$ and $j=1$, we infer
\[
\lambda_{m+n-1}(\zeta)\leq w^{-1},
\]
which is the claim.
\end{proof}


\begin{thebibliography}{99} 


\bibitem{bertis} V. I. Bernik and K.I. Tishchencko. Integral polynomials with an 
overfall of the coefficient values and Wirsing's problem. 
{\em Dokl. Akad. Nauk Belarusi} 37 (1993), no. 5, 9--11 (in Russian).

\bibitem{bugbuch} Y. Bugeaud. Approximation by algebraic numbers.
{\em Cambridge Tracts in Mathematics}, Cambridge 2004. 

\bibitem{bug} Y. Bugeaud. On simultaneous rational approximation to a real number
and its integral powers. {\em Ann. Inst. Fourier (Grenoble)} 60 (2010), 2165--2182.



\bibitem{buglau} Y. Bugeaud and M. Laurent. Exponents of Diophantine approximation
and Sturmian continued fractions. {\em Ann. Inst. Fourier (Grenoble)} 55 (2005), no. 3, 773--804.

\bibitem{buschl} Y. Bugeaud and J. Schleischitz. On uniform approximation to real numbers.
{\em Acta Arith.} 175 (2016), no. 3, 255--268. 

\bibitem{cassaigne} J. Cassaigne. Limit values of the recurrence sequence of Sturmian sequences.
{\em Theor. Comput. Sci.} 218 (1999), 3--12.


\bibitem{davsh} H. Davenport and W. M. Schmidt. 
Approximation to real numbers by algebraic
integers. {\em Acta Arith.} 15 (1969), 393--416.

\bibitem{fisch1} S. Fischler. Spectres pour l'approximation d'un nombre reel et
de son carree. {\em C.R. Math. Acad. Sci. Paris} 339 , no. 10 (2004), 679--682.

\bibitem{fisch2} S. Fischler. Palindromic prefixes and Diophantine approximation.
{Monatshefte Math.} 151, no. 1 (2007), 1--87.

\bibitem{german} O. German. On Diophantine exponents and Khintchine's transference principle. 
{\em Mosc. J. Comb. Number Theory} 2 (2012), 22--51.

\bibitem{khintchine} A.Y. Khintchine, \"Uber eine Klasse linearer diophantischer Approximationen,
{\em Rend. Circ. Mat. Palermo} 50 (1926), 706--714.

\bibitem{laurent} M. Laurent. Simultaneous rational approximation to the successive
powers of a real number. {\em Indag. Math.} 11 (2003), 45---53. 

\bibitem{leveque} 
W. J. Leveque. On Mahler's U-numbers, {\em J. London Math. Soc.} 28 (1953), 220--229.

\bibitem{damroy} D. Roy. On simultaneous rational approximations to a real number, its square, and its cube.
{\em Acta Arith.} 133 (2008), 185--197.


\bibitem{royyy} D. Roy, Approximation to real numbers by cubic algebraic integers I, 
{\em Proc. London Math. Soc.} 88 (2004), 42--62.
  

\bibitem{royexp} D. Roy. On two exponents of approximation related to a real number and its square.
{\em Canad. J. Math.} 59 (2007), 211--224. 

\bibitem{ichlondon} J. Schleischitz. Approximation to an extremal number, its square and its cube.
{\em to appear in Pacific J. Math., arXiv: 1602.04731}

\bibitem{schlei} J. Schleischitz. On the spectrum of Diophantine approximation constants.
{\em Mathematika} 62 (2016), 79--100.

\bibitem{j2} J. Schleischitz. Two estimates concerning classical Diophantine approximation constants,
{\em Publ. Math. Debrecen} 84/3-4 (2014), 415--437. 

\bibitem{ichstw} J. Schleischitz. Determination of some exponents of approximation for Sturmian 
continued fractions. {\em arXiv:} 1603.08808.

\bibitem{ichglasgow} J. Schleichitz. Some notes on the regular graph defined by Schmidt and Summerer
and uniform approximation. {\em to appear in JP J. Algebra Number Theory Appl., arXiv:} 1601.00842.

\bibitem{tnumbers} W.M. Schmidt. T-numbers do exist. 
{\em Symposia Math. IV}, Inst. Naz. di Alta Math. Academic Press,
Rome (1968), 3--26. 

\bibitem{ss} W.M. Schmidt, L. Summerer. Parametric geometry of numbers and applications, 
{\em Acta Arith.} 140 (2009), no. 1,  67--91.  


\bibitem{ssch} W.M. Schmidt, L. Summerer. 
Diophantine approximation and parametric geometry of numbers. 
{\em Monatsh. Math.} 169 (2013), 51--104. 

\bibitem{sums}  W.M. Schmidt, L. Summerer. 
Simultaneous approximation to three numbers.
{\em Mosc. J. Comb. Number Theory} 3 (2013), 84--107.

\bibitem{tish} K.I. Tishchenko,
On approximation of real numbers by algebraic numbers of bounded degree. 
{\em J. Number Theory} 123 (2007), 290--314.

\bibitem{wirsing} E. Wirsing. Approximation mit algebraischen Zahlen beschr\"ankten Grades.
{\em J. Reine Angew. Math.}  {206} (1961), 67--77.

\end{thebibliography}
\end{document}